\definecolor{crimson}{rgb}{0.86, 0.08, 0.24}
\definecolor{darkcyan}{rgb}{0.0, 0.55, 0.55}
\numberwithin{equation}{section}
\newtheorem{thm}{Theorem}[section]
\newtheorem{lem}[thm]{Lemma}
\newtheorem{prop}[thm]{Proposition}
\newtheorem{df}[thm]{Definition}
\newtheorem*{df*}{Definition}
\newtheorem{cor}[thm]{Corollary}
\newtheorem{rmk}[thm]{Remark}
\newtheorem{construction}[thm]{Construction}
\newtheorem{exam}[thm]{Example}
\newtheorem{quest}[thm]{Question}
\newtheorem{thmx}{Theorem}
\newcommand{\EF}{\widetilde{E}\mathcal{F}}
\newcommand{\MUR}{MU_\mathbb{R}}
\newcommand{\MUG}{MU^{(\!(G)\!)}}
\newcommand{\BPR}{BP_\mathbb{R}}
\newcommand{\BPCfour}{BP^{(\!(C_{4})\!)}}
\newcommand{\BPQeight}{BP^{(\!(Q_{8})\!)}}
\newcommand{\BPCtwoCtwo}{BP^{(\!(C_2 \times C_2)\!)}}
\newcommand{\BPCfourone}{BP^{(\!(C_{4})\!)}\langle 1 \rangle}
\newcommand{\BPCn}{BP^{(\!(C_{2^n})\!)}}
\newcommand{\Mpi}{{\underline{\pi}}}
\newcommand{\Cn}{C_{2^n}}
\newcommand{\FF}{\mathcal{F}}
\newcommand{\E}{\underline{\mathcal{E}}}
\newcommand{\sm}{\wedge}
\newcommand{\Tr}{Tr}
\newcommand{\im}{\textup{im}}
\newcommand{\SliceSS}{\textup{SliceSS}}
\newcommand{\LSliceSS}{\textup{LSliceSS}}
\title[A stratification of the equivariant slice filtration]{A stratification of the equivariant slice filtration}
\author{Lennart Meier}
\address{Mathematical Institute, Utrecht University, Utrecht, 3584 CD, the Netherlands}
\email{f.l.m.meier@uu.nl}
\author{XiaoLin Danny Shi}
\address{Department of Mathematics, 
University of Washington, 
4110 E Stevens Way NE,
Seattle, WA 98195
}
\email{dannyshixl@gmail.com}
\author{Mingcong Zeng}
\address{Department of Mathematics, 
Ny Munkegade 118, 
8000 Aarhus C, 
Denmark}
\email{mingcongzeng@gmail.com}
\begin{document}

\maketitle
\begin{abstract}
In this paper, we construct a stratification tower for the equivariant slice filtration.  This tower stratifies the slice spectral sequence of a $G$-spectrum $X$ into distinct regions.  Within each of these regions, the differentials are determined by the localized slice spectral sequences, which compute the geometric fixed points along with their associated residue group actions.  Consequently, the stratification tower offers an inductive method of understanding the entirety of the equivariant slice spectral sequence of $X$ by examining each of its distinct stratification regions. 
\end{abstract}

{\hypersetup{linkcolor=black}
\tableofcontents}

\section{Introduction}

The equivariant slice filtration was introduced by Hill, Hopkins, and Ravenel as a central component of their landmark solution to the Kervaire invariant one problem \cite{HHR}.  This filtration gives rise to the slice spectral sequence, a powerful tool for analyzing the equivariant homotopy groups of norms of the Real bordism spectrum $\MUG := N_{C_2}^{G} \MUR$.  

Beyond its significance in resolving the Kervaire invariant one problem, the equivariant slice spectral sequence has proven to be a highly effective computational tool in equivariant homotopy theory.  It has led to computational advancements in chromatic homotopy theory \cite{HahnShi, BHSZ, BHLSZ, HSWX}.

When the group is $C_2$, the $C_2$-equivariant slice filtration was originally introduced by Dugger \cite{DuggerKR}.  Dugger employed this filtration to compute Atiyah's Real $K$-theory $K_\mathbb{R}$ \cite{AtiyahKR}.  Beyond exhibiting both the complex Bott periodicity of $KU$ and the real Bott periodicity of $KO$, the $C_2$-equivariant slice spectral sequence of $K_\mathbb{R}$ is notably concentrated within the first and third quadrants, further bounded by lines with slopes $0$ and $1$.

In \cite{HHRKH}, Hill, Hopkins, and Ravenel presented the first comprehensive computation of a $C_4$-equivariant slice spectral sequence.  Their computation resulted in a complete computation of the height-2 Lubin--Tate theory, equipped with a $C_4$-action originating from the Morava stabilizer group (see also \cite{BehrensOrmsby, BBHS}).  During their computation, an intriguing pattern emerged: the $C_4$-slice spectral sequence is stratified into two distinct regions within both the first and third quadrants (see \cref{fig:HHRE2C4SS}).  These regions are bounded by lines with slopes $0$, $1$, and $3$, and they exhibit significant differences in the distribution of classes and the behavior of differentials.

\begin{figure}
\begin{center}
\makebox[\textwidth]{\hspace{0in}\includegraphics[page = 1, trim={4.93cm 7.55cm 8.4cm 2.82cm}, clip, scale = 0.8]{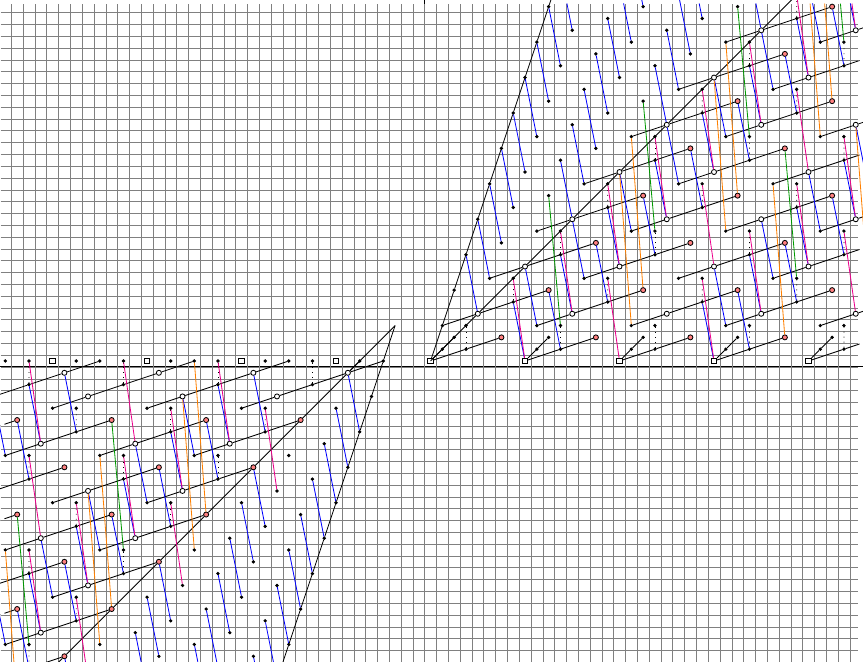}}
\caption{Hill--Hopkins--Ravenel's computation of the $C_4$-slice spectral sequence of $E_2^{hC_4}$.}
\hfill
\label{fig:HHRE2C4SS}
\end{center}
\end{figure}

More generally, the theory of Real orientations enables the utilization of the equivariant slice spectral sequence to compute the fixed points of Lubin--Tate theories via quotients of $\MUG$, as explored in \cite{HahnShi, BHSZ, HSWX}.  In \cite{HSWX}, the $C_4$-equivariant slice spectral sequence of a height-$4$ Lubin--Tate theory is computed.  Despite its greater complexity compared to Hill--Hopkins--Ravenel's height-$2$ computation, it exhibits analogous structural behavior.  The slice spectral sequence remains stratified into two distinct regions, bounded by lines with slopes $0$, $1$, and $3$, in the same manner as observed in Hill--Hopkins--Ravenel's computation.

\vspace{0.05in}

To this end, the motivations for this paper are as follows: 
\begin{enumerate}
\item Is the equivariant slice spectral sequence of a $G$-spectrum $X$ always stratified into distinct regions? 
\item If the answer to (1) is affirmative, then is there a systematic method to individually analyze the differentials within each region and subsequently amalgamate this information to reconstruct the entire slice spectral sequence?
\end{enumerate}

\begin{thmx}[Stratification]\label{thm:introThm1IntegerStratification}
Let $G$ be a finite group and $X$ a $G$-spectrum.  There is a decreasing filtration of the equivariant slice spectral sequence of $X$ given by the tower 
\[\left\{\EF_{\leq \bullet} \wedge \SliceSS(X), \, 0 \leq \bullet \leq |G| \right\} \]
of localized slice spectral sequences, where $\EF_{\leq 0} \wedge \SliceSS(X) \simeq \SliceSS(X)$ and $\EF_{\leq |G|} \wedge \SliceSS(X) \simeq *$.  This filtration has the property that for all $0 \leq h \leq |G|$, the map 
\[\varphi_h: \SliceSS(X) \longrightarrow \EF_{\leq h} \wedge \SliceSS(X)\]
induces an isomorphism of $RO(G)$-graded spectral sequences on or above a line of slope $(h-1)$ within the region where $t-s \geq 0$. 
\end{thmx}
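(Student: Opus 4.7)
The plan is to identify $\EF_{\leq h}$ with $\widetilde{E}\mathcal{F}_{\leq h}$, the cofiber of $E\mathcal{F}_{\leq h, +} \to S^0$, where $\mathcal{F}_{\leq h}$ denotes the family of subgroups of $G$ of order at most $h$. The inclusions $\mathcal{F}_{\leq h-1} \subseteq \mathcal{F}_{\leq h}$ induce natural maps $\widetilde{E}\mathcal{F}_{\leq h-1} \to \widetilde{E}\mathcal{F}_{\leq h}$, producing the desired filtration after smashing with $\SliceSS(X)$. The boundary conditions $\widetilde{E}\mathcal{F}_{\leq 0} = S^0$ and $\widetilde{E}\mathcal{F}_{\leq |G|} \simeq *$ are immediate, since $\mathcal{F}_{\leq 0}$ is empty and $\mathcal{F}_{\leq |G|}$ contains every subgroup of $G$.

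The heart of the proof is the isomorphism statement. I would recast it via the cofiber sequence
\[
E\mathcal{F}_{\leq h, +} \wedge X \longrightarrow X \longrightarrow \widetilde{E}\mathcal{F}_{\leq h} \wedge X,
\]
so that it suffices to show that the slice spectral sequence of $E\mathcal{F}_{\leq h, +} \wedge X$ is supported strictly below a line of slope $h-1$ within the region $t - s \geq 0$. Because $E\mathcal{F}_{\leq h}$ admits a $G$-CW structure with cells of the form $G/K_+$ for $K \in \mathcal{F}_{\leq h}$, a standard induction up the cellular filtration reduces this to the corresponding claim for $G/K_+ \wedge X$ whenever $|K| \leq h$.

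For each such $K$, the Wirthm\"uller equivalence gives $G/K_+ \wedge X \simeq \textup{Ind}_K^G(X|_K)$, and the slice filtration is compatible with induction: a $K$-slice cell of dimension $n$ induces to a $G$-slice cell of dimension $n$. Through the induction--restriction adjunction and the restriction map $RO(G) \to RO(K)$, the $G$-slice spectral sequence of $G/K_+ \wedge X$ is then identified with the $K$-slice spectral sequence of $X|_K$. The Hill--Hopkins--Ravenel slope theorem, applied to the group $K$, provides a vanishing line of slope $|K|-1 \leq h-1$ for this spectral sequence in the $t-s \geq 0$ region, and patching these bounds together along the cellular filtration of $E\mathcal{F}_{\leq h}$ delivers the desired bound for $E\mathcal{F}_{\leq h, +} \wedge X$.

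The principal technical obstacle is verifying that slopes are preserved under restriction and induction: one must check that a vanishing line of slope $|K|-1$ in the $K$-slice spectral sequence transports to a line of slope $|K|-1$ in the $(t-s,s)$ plane of the $G$-slice spectral sequence. This requires careful bookkeeping with underlying dimensions of $K$-representations versus $G$-representations and with how induction relates slice filtration degrees to integer bigradings. A secondary point is passing from the bound on individual cells to the full slice tower of $X$, which should follow from conditional convergence of the slice spectral sequence, but deserves explicit treatment, particularly for $G$-spectra that are not bounded below.
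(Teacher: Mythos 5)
Your filtration setup matches the paper's, and passing to the cofiber sequence $E\mathcal{F}_{\leq h, +} \wedge X \to X \to \widetilde{E}\mathcal{F}_{\leq h} \wedge X$ is a reasonable first step. From there, though, your route is genuinely different from the paper's and has two substantive gaps. The paper factors $\varnothing \subset \mathcal{F}_{\leq h}$ into a chain of one-step inclusions, each adding a single conjugacy class, and analyzes the fiber of each $\widetilde{E}\mathcal{F} \to \widetilde{E}\mathcal{F}'$ via the geometric-fixed-points criterion for connectivity: since $\Phi^K$ of that fiber is $S^0$ only for $K$ conjugate to a fixed $H$, the connectivity of the fiber smashed with $P^n_n X$ drops out of a single application of \cite[Theorem~2.5]{HillYarnall}. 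That criterion is global, so no cell-by-cell patching or convergence argument is needed. Your cellular/Wirthm\"uller route is a plausible alternative, and your identification of the slice tower of $G/K_+ \wedge X$ with the induced-up $K$-slice tower is correct (restriction and induction both commute with slice truncation), but it is more delicate at exactly the points you flag.

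The first gap is that you cite the Hill--Hopkins--Ravenel slope theorem, which is the \emph{integer-graded} positive cone, while \cref{thm:introThm1IntegerStratification} asserts an isomorphism of $RO(G)$-graded spectral sequences. After restricting along $RO(G) \to RO(K)$, what you actually need is the $RO(K)$-graded positive cone for every subgroup $K$ of order at most $h$. That $RO$-graded vanishing region is precisely \cref{thm:SliceSSVanishingRegion} of this paper (applied to $K$), and its proof again rests on Hill--Yarnall, so your route does not avoid that input so much as silently presuppose it. You cannot treat it as a citation to \cite{HHR}.

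The second gap is that showing the $E_2$-page of the fiber tower vanishes above a line of slope $h-1$ does not by itself produce an ``isomorphism of spectral sequences'' in the sense of \cref{df:SSisomRegion}. For the slice spectral sequence a $d_r$-differential moves one step to the left and $r$ steps up, so differentials can enter the region on or above a line of positive slope from below it, and the resulting $E_{r+1}$-pages need not match above the line just because the $E_2$-pages do. One must verify directly that $d_r$-differentials \emph{originating} on or above the line correspond on every page; the paper does this with the inductive argument of \cref{thm:ROGSliceIsom1}, which your write-up omits entirely. The convergence concern you raise about passing from skeleta to the full (typically infinite) complex $E\mathcal{F}_{\leq h}$ is also real, but it is the more routine of the three issues.
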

 
In \cref{thm:introThm1IntegerStratification}, $\mathcal{F}_{\leq h}$ is the family consisting of all subgroups of $G$ or order $\leq h$, as defined in \cref{df:orderFamilies}.  We refer to the tower in \cref{thm:introThm1IntegerStratification} as the \textit{stratification tower}, as it effectively stratifies the slice spectral sequence of $X$ into distinct regions on each $RO(G)$-graded page.  Each of these individual regions offers greater manageability than dealing with the entire slice spectral sequence.  In particular, when $G = C_{p^n}$, the equivariant slice spectral sequence of a $C_{p^n}$-spectrum $X$ will be stratified into $n$ distinct regions, separated by lines of slopes $0$, $(p-1)$, $(p^2-1)$, $\ldots$, and $(p^{n}-1)$.

\subsection{The stratification tower} The construction of the stratification tower in \cref{thm:introThm1IntegerStratification} relies on the 
localized slice spectral sequence, as studied by the authors in \cite{MeierShiZengHF2}.  

Let $G$ be a finite group, $X$ a $G$-spectrum, and $\FF$ a family of subgroups of $G$.  The localized slice spectral sequence of $X$ with respect to $\mathcal{F}$ is the spectral sequence associated with the tower $\EF \wedge P^\bullet X$, obtained by smashing the universal space $\EF$ with the slice tower of $X$.  This spectral sequence is denoted by $\EF \wedge \SliceSS(X)$.  

Whenever there is an inclusion $\mathcal{F} \subset \mathcal{F}'$ of families, it induces a map $\EF \to \EF'$ of universal spaces, consequently leading to a map 
\[\varphi: \EF \wedge \SliceSS(X) \longrightarrow \EF' \wedge \SliceSS(X)\]
of the corresponding localized slice spectral sequences.  At the core of analyzing the stratification tower in \cref{thm:introThm1IntegerStratification} is the following theorem, which compares two localized slice spectral sequences and establishes isomorphism regions between them. 

\begin{thmx}[Comparison, \cref{thm:ROGSliceIsom2}]\label{thm:IntroThm3Comparison}
For $V \in RO(G)$, the map 
\[\varphi: \EF \wedge \SliceSS(X) \longrightarrow \EF' \wedge \SliceSS(X)\]
induces an isomorphism of spectral sequences on the $(V+t-s, s)$-graded page within the region $\mathcal{R}_{\FF, \FF'}$ that is defined by the following inequalities: 
\[\left\{\begin{array}{ll}
s \geq (|H|_{\max}-1)(t-s) + \tau_V(\FF'\setminus \FF), & \text{when } t-s \geq 0,\\
s \geq (|H|_{\min}-1)(t-s) + \tau_V(\FF'\setminus \FF), & \text{when } t-s < 0.
\end{array} \right.\]
Here, $|H|_{\max}$ and $|H|_{\min}$ represent the orders of the largest and the smallest subgroups in $\FF' \setminus \FF$, respectively.  The constant $\tau_V(\FF' \setminus \FF)$ depends on $V$, $\FF$, and $\FF'$ as defined in \cref{df:HmaxHmintau}.
\end{thmx}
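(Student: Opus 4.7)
The plan is to place $\varphi$ in a cofiber sequence of $G$-spectra, identify the fiber as a spectrum whose nontrivial geometric fixed points sit only at subgroups in $\FF' \setminus \FF$, and then argue that this fiber cannot contribute to $E_1$-entries inside the region $\mathcal{R}_{\FF,\FF'}$. Concretely, the inclusion $\FF \subset \FF'$ yields a map $E\FF_+ \to E\FF'_+$ and a cofiber sequence $F \to \EF \to \EF'$. Inspecting geometric fixed points shows $\Phi^H F \simeq \ast$ for $H \in \FF$ (both cofactors are contractible) and for $H \notin \FF'$ (both are $S^0$ with the connecting map the identity), so $\Phi^H F$ is nontrivial only for $H \in \FF' \setminus \FF$. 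Smashing this cofiber sequence levelwise with the slice tower $P^\bullet X$ converts the theorem into the assertion that $F \wedge \SliceSS(X)$ has trivial $E_1$-page at all $(V + t - s, s)$-bidegrees in $\mathcal{R}_{\FF,\FF'}$ (together with the adjacent bidegrees needed to detect differentials in and out).

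The next step is a cell-by-cell analysis of the slice associated graded of $X$. Its $n$-th slice is built from slice cells of the form $G_+ \wedge_K S^{k\rho_K}$ with $k|K| = n$, so I would analyse $F \wedge (G_+ \wedge_K S^{k\rho_K})$ for each pair $(K,k)$. Using the isotropy-separation description of $F$, this contribution decomposes as a sum over $H \in \FF' \setminus \FF$ of pieces that vanish unless (a conjugate of) $H$ sits inside $K$. When $H \subseteq K$, the $\Phi^H$-dimension of $S^{k\rho_K}$ equals $k |K^H| = k|K|/|H|$. Passing to $(V + t - s, s)$-coordinates with $s = k|K|$, the extremal class produced lies on a line of slope $|H|-1$ with an intercept controlled by $V^H$; ranging over all $H \in \FF' \setminus \FF$ produces precisely the constant $\tau_V(\FF'\setminus\FF)$ of \cref{df:HmaxHmintau}.

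Finally I would split into the cases $t - s \geq 0$ and $t - s < 0$. When $t - s \geq 0$, the most restrictive (largest-slope) line is produced by the subgroup of largest order in $\FF' \setminus \FF$, giving the $|H|_{\max} - 1$ bound; when $t - s < 0$ the inequalities reverse and the dominant constraint comes from the subgroup of smallest order, giving the $|H|_{\min} - 1$ bound. In either case, every contribution from $F \wedge \SliceSS(X)$ lies strictly outside $\mathcal{R}_{\FF,\FF'}$, so the long exact sequence on $\pi_\star^G$ attached to $F \to \EF \to \EF'$ shows that $\varphi$ induces an isomorphism on every $E_1$-entry of $\mathcal{R}_{\FF,\FF'}$ and on the surrounding entries, giving the claimed isomorphism of spectral sequences.

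The main obstacle I expect is the dimension bookkeeping in the middle step: one must simultaneously track the $RO(G)$-grading $V$, the integer slice filtration $n = k|K|$, and the $H$-fixed-point dimensions, and verify that the intercept of the slope-$(|H|-1)$ line agrees on the nose with the constant $\tau_V(\FF'\setminus\FF)$ of \cref{df:HmaxHmintau}. The change of the dominant subgroup between the $t - s \geq 0$ and $t - s < 0$ halves is subtle and will require a careful sign/inequality check, but conceptually the argument reduces the comparison theorem to a geometric-fixed-points computation on individual slice cells.
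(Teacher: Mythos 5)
Your fiber-of-$\varphi$ setup and the geometric-fixed-points bookkeeping for the $E_2$-page match the paper's opening move (the paper's \cref{prop:E2IsomROG}), though the paper gets the connectivity bound for $\Phi^H(P^{|V|+t}_{|V|+t}X)$ directly from Hill--Yarnall's theorem rather than from a cell-by-cell analysis of slice cells. Your cell-level sketch would need more care: $P^n_n X$ is not generally a wedge of slice cells $G_+\wedge_K S^{k\rho_K}$, only an object of the localizing subcategory they generate, so the connectivity estimate has to be propagated through colimits and cofiber sequences rather than read off termwise. This is repairable, and the paper's invocation of Hill--Yarnall is really the efficient packaging of exactly this connectivity statement. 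There is also a small overstatement at the boundary: on the line itself the fiber's homotopy need not vanish --- the paper shows only surjectivity there (parts (2), (3) of \cref{prop:E2IsomROG}), and the boundary line is included in $\mathcal{R}_{\FF,\FF'}$ --- so ``every contribution lies strictly outside'' is not quite right.

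The genuine gap is your last step. An isomorphism of $E_2$-entries in a region (even with the adjacent entries controlled) does \emph{not} by itself yield an isomorphism of spectral sequences in the sense the statement requires (the paper's \cref{df:SSisomRegion}): one must show that nonzero $d_r$-differentials originating on or above the bounding line are preserved and detected in both directions, for all $r$. The long exact sequence on $\pi_\star^G$ attached to $F \to \EF \to \EF'$ gives information only at the $E_2$-level; the danger is a class on or above the line whose $d_r$-differential lands below the line, or a target class whose preimage supports a shorter differential not visible in the truncated region. The paper handles this with a dedicated inductive argument on $r$ (the statements $I_r$ and $S_r$ in the proof of \cref{thm:ROGSliceIsom1}), keeping track of the vertical shifts $\mathcal{L}_{H,V}^{r'}$ and using the $E_2$-surjectivity on the boundary line as a base case. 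Without this inductive bootstrap, the proof is incomplete. A second, smaller difference: the paper first factors $\FF\subset\FF'$ into a chain where each step adjoins conjugates of a single subgroup $H_i$, proves the one-step theorem, and then intersects the resulting isomorphism regions; handling all of $\FF'\setminus\FF$ in a single fiber $F$ as you propose is fine for the $E_2$-vanishing estimate, but the surjectivity/kernel analysis on the boundary (and the differential-tracking argument above) is cleaner one conjugacy class at a time.
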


To prove \cref{thm:IntroThm3Comparison}, we examine the fiber of the map $\varphi$ and establish its vanishing regions.  The vanishing region in the fiber will precisely correspond to the isomorphism region of $\varphi$.  Our analysis relies on the novel formulation of slice connectivity introduced by Hill and Yarnall \cite{HillYarnall}, where they prove a crucial connection between slice connectivity and classical connectivity of the geometric fixed points.

The following result is an immediate consequence from \cref{thm:IntroThm3Comparison} by setting $\mathcal{F} = \varnothing$ and $\FF' = \FF_{\leq h}$.  
\begin{thmx}[Slice Recovery Theorem, \cref{thm:SliceRecovery1}]\label{thm:introThmSliceRecovery}
Let $X$ be a $G$-spectrum, and let $h \geq 1$.  On the $(V+t-s, s)$-graded page where $t-s \geq 0$, the map
\[\varphi_h: \SliceSS(X)  \longrightarrow \EF_{\leq h} \sm \SliceSS(X)\]
induces an isomorphism of spectral sequences on or above the line $\mathcal{L}_{h-1}^V$, as defined in \cref{df:LineLVh-1}.  In the region where $t-s < 0$, $\varphi_h$ induces an isomorphism of spectral sequences on or above the horizontal line $s= \tau_V(\mathcal{F}_{\leq h})$.
\end{thmx}

We also extend the vanishing region established by Hill, Hopkins, and Ravenel to the $RO(G)$-graded page. 

\begin{thmx}[Positive Cone, \cref{thm:SliceSSVanishingRegion}] \label{thm:introThm4PositiveCone}
Let $X$ be a connective $G$-spectrum and $V \in RO(G)$.  All the classes on the $(V+t-s, s)$-graded slice spectral sequence of $X$ are concentrated within the conical region defined by the following inequalities: 
\begin{align*}
s & \geq -|V|-k(V)\cdot|G|, \\
s & \leq (|G|-1)(t-s) - |V|+ |G|\cdot \max_{H \subseteq G} |V^H|. 
\end{align*}
Here, $k(V)$ is the smallest non-negative integer such that the representation \linebreak ${k(V) \cdot \rho_G + V}$ is an actual $G$-representation. 
\end{thmx}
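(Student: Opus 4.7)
The plan is to pin down when the $E_2$-term $\pi^G_{V+t-s}(P^s_s X)$ of the $(V+t-s, s)$-graded slice spectral sequence can be nonzero; any $E_r$-class lives in a subquotient of this, so vanishing on $E_2$ propagates. Since $X$ is connective, each slice $P^s_s X$ lies in the localizing subcategory generated by Hill--Hopkins--Ravenel's slice $s$-cells of the form $G_+\wedge_H S^{k\rho_H-\epsilon}$ with $\epsilon\in\{0,1\}$, $H\leq G$, $k\geq 0$, and $k|H|-\epsilon=s$. It therefore suffices to verify the two conical inequalities for each slice cell individually.

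For the upper bound $s\leq(|G|-1)(t-s)-|V|+|G|\max_H|V^H|$, I would apply the Wirthm\"uller isomorphism to identify
\[
\pi^G_{V+t-s}(G_+\wedge_H S^{k\rho_H-\epsilon})\cong \pi^H_{\Res^G_H(V+t-s)-k\rho_H+\epsilon}(S^0),
\]
and then invoke Hill--Yarnall's slice-connectivity criterion applied to $S^0$: since $S^0$ is slice $\geq 0$, $\pi^H_U(S^0)$ can be nonzero only when $\Phi^K S^{-U}$ has non-negative connectivity for every $K\leq H$. Unpacking these constraints yields linear inequalities on the fixed-point dimensions $|V^K|$ and on the quantities $k|H|/|K|-(t-s)-\epsilon$. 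The extremal combinations, taking the $(H,K)$ pair that minimizes the ratio $|V^K|/|H|$, produce the slope-$(|G|-1)$ line, while the additive term $-|V|+|G|\max_H|V^H|$ records the tightest constant contributed by the largest fixed-point dimension of $V$.

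For the lower bound $s\geq -|V|-k(V)|G|$, I would employ a shift trick: set $W=V+k(V)\rho_G$, which by minimality of $k(V)$ is an actual $G$-representation of integer dimension $|V|+k(V)|G|$. Then $X\simeq S^{-W}\wedge(S^W\wedge X)$, and since $S^W\wedge X$ is connective its slice $E_2$-page vanishes in filtrations $s'<0$ by the classical integer-graded Hill--Hopkins--Ravenel bound. Re-indexing along $s=s'-|W|$ translates this into the inequality $s\geq -|W|=-|V|-k(V)|G|$ on the $(V+t-s, s)$-graded page of $\SliceSS(X)$.

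The main obstacle I anticipate is in the upper bound: getting exactly the constant $-|V|+|G|\max_H|V^H|$ rather than the coarser $(|G|-1)|V|$ requires analyzing each subgroup $H$ separately and selecting the worst-case fixed-point dimension, which is delicate when $V$ is virtual since the max can be attained by a proper subgroup. A secondary technical issue is the irregular slice cells $G_+\wedge_H S^{k\rho_H-1}$, whose off-by-one $\epsilon=1$ correction must be absorbed into the conical bounds without loosening them; this amounts to verifying that the integer shifts produced by the irregular cells never cross the slope-$(|G|-1)$ boundary.
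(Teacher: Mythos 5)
Your general strategy (reduce to slice-level vanishing, use Hill--Yarnall connectivity, and a representation-sphere shift for the lower bound) is in the right spirit, but both halves of your argument as written have a genuine gap, and the paper's proof circumvents both.

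For the upper bound, your reduction to slice cells rests on the claim that each slice $P^{s}_{s}X$ (the paper's indexing is actually $P^{|V|+t}_{|V|+t}X$ for the $(V+t-s,s)$-spot, so also watch the degree bookkeeping) lies in the localizing subcategory generated by slice cells of slice dimension \emph{exactly} $s$. This is not true: a slice $\geq n$ spectrum lies in the localizing subcategory generated by slice cells of dimension $\geq n$, not $=n$, and the $n$-slice $P^n_n X$ is not in general built from $n$-slice cells. You would therefore have to control cells of all higher slice dimensions uniformly, which your Wirthm\"uller computation does not address. The paper sidesteps cells entirely: it applies the Hill--Yarnall criterion (\cite[Theorem~2.5]{HillYarnall}) directly to $\Phi^H\bigl(P^{|V|+t}_{|V|+t}X\bigr)$, which is $\bigl\lceil (|V|+t)/|H|\bigr\rceil$-connective, then computes $\Phi^H\bigl(S^{-V}\wedge P^{|V|+t}_{|V|+t}X\bigr)\simeq S^{-|V^H|}\wedge \Phi^H\bigl(P^{|V|+t}_{|V|+t}X\bigr)$ and bounds $\min_H\bigl(-|V^H|+\lceil(|V|+t)/|H|\rceil\bigr)$ from below. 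No Wirthm\"uller, no cell decomposition, and the constant $-|V|+|G|\max_H|V^H|$ drops out cleanly.

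For the lower bound, your shift trick via $W = V + k(V)\rho_G$ and the re-indexing $s = s'-|W|$ implicitly requires a compatibility of the form $P^{n}X\simeq S^{-W}\wedge P^{n+|W|}(S^W\wedge X)$. This holds only when $W$ is a multiple of the \emph{regular} representation $\rho_G$ (\cite[Corollary~4.25]{HHR}); it fails for a general actual representation $W$, and $V+k(V)\rho_G$ is typically not a multiple of $\rho_G$. The paper's proof is careful to split $S^{-V}\simeq S^{-V'}\wedge S^{k(V)\rho_G}$ with $V'=V+k(V)\rho_G$ actual, commute only the $S^{k(V)\rho_G}$ factor through the slice functor, and then handle the residual $S^{-V'}$ smash with \cref{lem:SliceCoconnectivity} (smashing a slice $\leq n$ spectrum with $S^{-W}$ for actual $W$ preserves slice $\leq n$ and kills $\Mpi_k$ for $k>n$). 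Your argument, as phrased, skips both the decomposition and the coconnectivity lemma, so the re-indexing step is unjustified.
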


In summary, \cref{thm:introThm4PositiveCone} shows that all the classes on the $(V+t-s, s)$-graded page are concentrated within a conical region bounded by a horizontal line and a line of slope $(|G|-1)$.  On the integer-graded page, where $V = 0$ and $k(V) = 0$, this region is the cone bounded by the inequalities $s \geq 0$ and ${s \leq (|G|-1)(t-s)}$ and is commonly referred to as the ``positive cone''. 
 \cref{thm:introThm4PositiveCone} extends the explicit identification of the positive cone in the integer-graded slice spectral sequence by Hill, Hopkins, and Ravenel \cite{HHR} to the $RO(G)$-graded pages.  

\cref{thm:introThm1IntegerStratification} is a consequence of \cref{thm:IntroThm3Comparison} and \cref{thm:introThm4PositiveCone}.  To construct the stratification tower, we consider the poset $\{\EF \wedge \SliceSS(X)\}$ consisting of all localized slice spectral sequences, obtained from the poset $\{\FF\}$ of all families of subgroups of $G$ (ordered by inclusion).  The chain of inclusions 
\[\FF_{\leq 0} \subset \FF_{\leq 1} \subset \cdots \subset \FF_{\leq |G|}\]
induces a corresponding chain of universal spaces, leading to the chain  
\begin{equation}\label{eq:IntroLocSliceSSChain}
\SliceSS(X) \longrightarrow \EF_{\leq 1} \wedge \SliceSS(X)\longrightarrow \cdots \longrightarrow \EF_{\leq |G|-1} \wedge \SliceSS(X) \longrightarrow *
\end{equation}
within the poset $\{\EF \wedge \SliceSS(X)\}$.  This constitutes our stratification tower. 

From our definition, it is evident that the families $\mathcal{F}_{\leq h-1}$ and $\mathcal{F}_{\leq h}$ differ only when there exists a subgroup $H \subset G$ of order $h$.  For such a subgroup, setting $\FF = \varnothing$ and $\mathcal{F}' = \mathcal{F}_{\leq |H|}$ in \cref{thm:IntroThm3Comparison} shows that on the $(V+t-s, s)$-graded page within the region $t-s \geq 0$, the slice spectral sequence of $X$ on or above a line of slope $(|H| -1)$ can be fully recovered from the localized slice spectral sequence $\EF_{\leq |H|} \wedge \SliceSS(X)$.  

Let $1 = h_0 < h_1 < \cdots < h_k < |G|$ denote the orders of subgroups of $G$.  The tower (~\ref{eq:IntroLocSliceSSChain}) stratifies the positive cone in \cref{thm:introThm4PositiveCone} into distinct regions, separated by lines of slope $(h_i-1)$, $1 \leq i \leq k$.  The portion of the slice spectral sequence on or above a line of slope $(h_i-1)$ can be fully recovered from the localized slice spectral sequence $\EF_{\leq h_i} \wedge \SliceSS(X)$ (see \cref{fig:IntroPicStratificationTower}).

\[\begin{tikzcd}
\SliceSS(X) \ar[rr, "0"] \ar[rrd, swap, "h_1-1"] \ar[rrddd, swap, bend right = 7, "h_{k-1}-1"] \ar[rrdddd, bend right = 35, swap, "h_k-1"]&& \EF_{\leq 1} \wedge \SliceSS(X) \ar[d]  \\
&& \EF_{\leq h_1} \wedge \SliceSS(X) \ar[d]  \\
&& \vdots \ar[d] \\ 
&& \EF_{\leq h_{k-1}} \wedge \SliceSS(X) \ar[d] \\
&& \EF_{\leq h_k} \wedge \SliceSS(X) 
\end{tikzcd}\]

\begin{figure}
\begin{center}
\makebox[\textwidth]{\hspace{-0.5in}\includegraphics[trim={0cm 0cm 0cm 0cm}, clip, scale = 0.6]{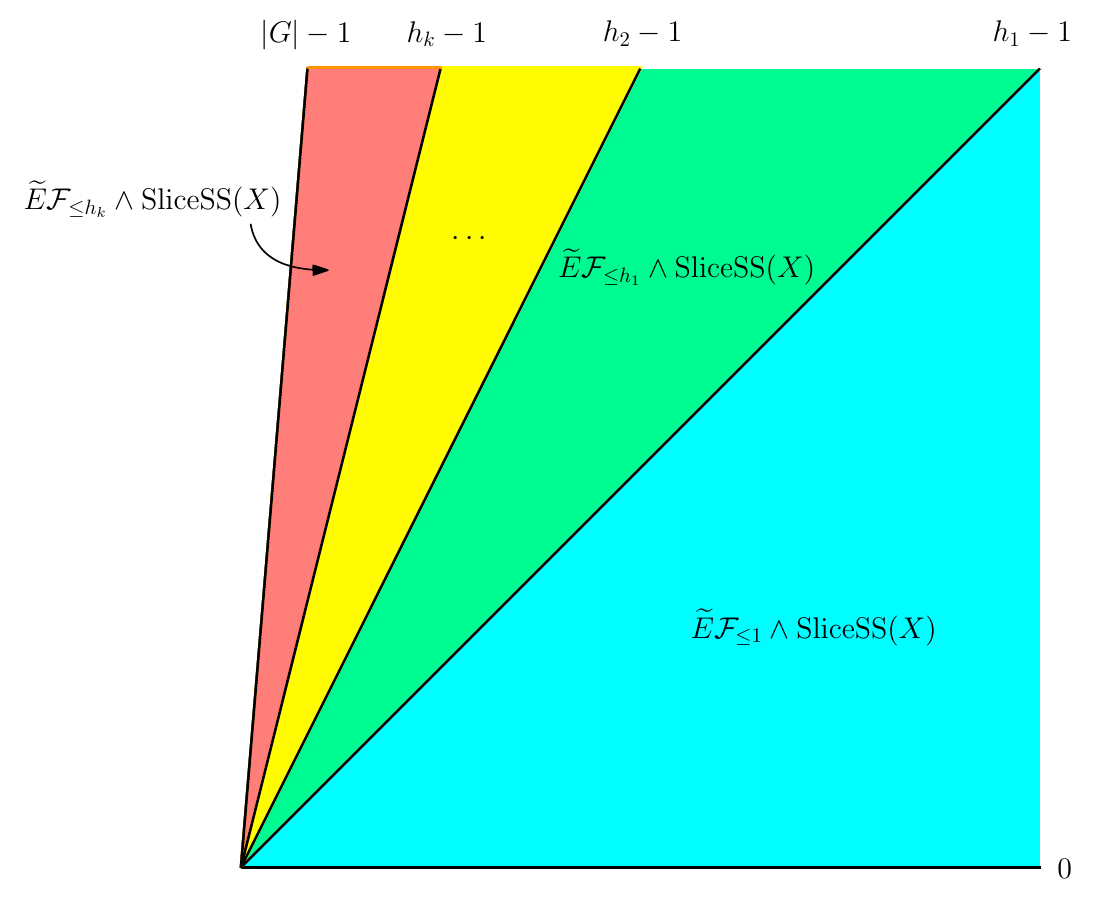}}
\caption{Stratification regions of the slice spectral sequence.}
\hfill
\label{fig:IntroPicStratificationTower}
\end{center}
\end{figure}

\subsection{Applications and further questions}
\subsubsection*{Slice spectral sequences of Lubin--Tate theories}
In \cite{HahnShi}, Hahn and the second author established Real orientations for Lubin--Tate theories.  These Real orientations, combined with the models developed in \cite{BHSZ}, provide a framework for employing the slice spectral sequence to compute fixed points of Lubin--Tate theories via quotients of $\MUG$. 

In \cite{MeierShiZengTranschromatic}, we utilize the stratification tower in \cref{thm:introThm1IntegerStratification} for $G = \Cn$ to analyze the slice spectral sequences of Lubin--Tate theories.  More specifically, we prove that this stratification tower exhibits a type of transchromatic phenomenon, where certain stratification regions in higher height theories will be isomorphic to those in lower height theories through ``shearing isomorphisms".  This approach provides an inductive method for computing higher height theories from lower height theories while also establishing periodicities and vanishing lines in the process. 

\subsubsection*{Stratification of the negative cone}
In this paper, our primary focus is on the positive cone. This is because, for the applications discussed above, the crucial examples are quotients of $\MUG$ and these are connective.  However, it is important to note that in general, when $X$ is not connective, a ``negative cone'' also appears in the slice spectral sequence of $X$, bounded by a horizontal line and a line of slope $(|G|-1)$.    

Given the stratification results established in this paper, a natural question arises: 
\begin{quest}\rm
Can similar stratification results be established for the negative cone?    
\end{quest}  
In an ongoing project with Yutao Liu and Guoqi Yan, we demonstrate that this is indeed possible through an analysis of spectral sequences associated with the towers $F(\EF_{\leq h}, P^\bullet X)$.  We will also introduce a generalized Tate diagram of spectral sequences to explore the interactions between the positive and the negative cones and to identify isomorphism regions among various equivariant spectral sequences.  These include the generalized homotopy orbit spectral sequence, the generalized homotopy fixed point spectral sequence, the generalized Tate spectral sequence, and the (localized) slice spectral sequence.    

\subsubsection*{Stratification of $\BPQeight$}
In contrast to the case when $G=C_{2^n}$, where the equivariant filtration of $\BPCn$ is fully understood by the work of Hill, Hopkins, and Ravenel \cite{HHR}, the $Q_8$-filtration of $\BPQeight:= N_{C_2}^{Q_8}(\BPR)$ remains largely unexplored.
\begin{quest}
What is the $Q_8$-filtration of $\BPQeight$?     
\end{quest}
One promising approach for constructing the appropriate $Q_8$-filtration is to leverage the insights gained from the stratification tower. Specifically, when applied to $Q_8$, the stratification tower suggests that this filtration should be established by combining information from $\BPCtwoCtwo$ (corresponding to the quotient group $Q_8/C_2$) with information from $\BPCfour$ (corresponding to the subgroup $C_4 \subset Q_8$).

\subsubsection*{Poset of spectral sequences}
When working with any equivariant filtration $F^\bullet X$ for a $G$-spectrum $X$, it is always possible to construct the poset $\{\EF \wedge F^\bullet X\}$, from which we obtain a poset of the corresponding spectral sequences.  The focus of this paper has been on the chain ${\{\EF_{\leq i} \wedge F^\bullet X, 0 \leq i \leq |G|\}}$ when ${F^\bullet X = P^\bullet X}$ is the slice filtration, from which we established a stratification of the slice spectral sequence.  

\begin{quest}
What other meaningful phenomena can be extracted from the poset $\{\EF \wedge \SliceSS(X)\}$?
\end{quest}

For example, for $G = Q_8$, the stratification tower we built is
\[\SliceSS(X) \longrightarrow \EF_{\leq 1} \wedge \SliceSS(X) \longrightarrow \EF_{\leq 2} \wedge \SliceSS(X) \longrightarrow \EF_{\leq 4} \wedge \SliceSS(X).\]
To refine this stratification tower further, note that there are three distinct $C_4$-subgroups of $Q_8$, namely $\langle i \rangle$, $\langle j \rangle$, and $\langle k \rangle$.  These subgroups give three families: 
\begin{align*}
\mathcal{F}_{\langle i \rangle} & := \{\langle 1 \rangle, \langle -1 \rangle, \langle i \rangle\} \\
\mathcal{F}_{\langle j \rangle} & := \{\langle 1 \rangle, \langle -1 \rangle, \langle j \rangle\} \\
\mathcal{F}_{\langle k \rangle} & := \{\langle 1 \rangle, \langle -1 \rangle, \langle k \rangle\}.    
\end{align*}
The localized slice spectral sequences corresponding to these families naturally fit within the stratification tower, forming the following diagram:

\[\begin{tikzcd}
& \EF_{\langle i \rangle} \wedge \SliceSS(X) \ar[rd] \\
\EF_{\leq 2} \wedge \SliceSS(X) \ar[ru] \ar[r] \ar[rd] & \EF_{\langle j \rangle} \wedge \SliceSS(X) \ar[r] & \EF_{\leq 4} \wedge \SliceSS(X)\\
& \EF_{\langle k \rangle} \wedge \SliceSS(X) \ar[ru]
\end{tikzcd}\]
It is interesting to analyze how these localized spectral sequences further refine the stratification tower constructed in this paper. 

\begin{quest}
How to extended our analysis to the poset $\{\EF \wedge F^\bullet X\}$ for other equivariant filtrations? 
\end{quest}
Potential candidates include the filtration for the generalized homotopy orbit, homotopy fixed point, and Tate spectral sequences.  Additionally, we may consider the $\mathcal{O}$-slice filtration developed by Mike Hill, which interpolates between the equivariant Postnikov filtration and the slice filtration.

\subsection{Organization of the paper}
In \cref{sec:LocSliceSS}, we analyze maps between localized slice spectral sequences and prove \cref{thm:IntroThm3Comparison}.  In \cref{sec:Stratification}, we establish \cref{thm:introThmSliceRecovery}, construct the stratification tower, and prove \cref{thm:introThm4PositiveCone}.  The main theorem of this paper, \cref{thm:introThm1IntegerStratification}, follows from \cref{thm:introThmSliceRecovery} and \cref{thm:introThm4PositiveCone}.  

In \cref{sec:SpecializationCyclic}, we specialize our results to the case when $G$ is a cyclic $p$-group.  Here, the localized slice spectral sequences in the stratification tower compute geometric fixed points with residue quotient group actions.  This will prove useful in our future applications to Lubin--Tate theories.  

\subsection*{Acknowledgements}
We would like to thank William Balderrama, Agn\`{e}s Beaudry, Mark Behrens, Jeremy Hahn, Mike Hopkins, Hana Jia Kong, Tyler Lawson, Guchuan Li, Wenao Li, Yutao Liu, Doug Ravenel, Vesna Stojanoska, Guozhen Wang, and Guoqi Yan for helpful conversations.  We would like to thank Mike Hill and Zhouli Xu for comments on earlier drafts of our paper.  The first author is supported by the NWO grant VI.Vidi.193.111 and the second author is supported in part by NSF Grant DMS-2313842.

\subsection*{Notations and conventions}
\begin{enumerate}
\item The slice tower we consider will always be the \emph{regular} slice towers (see \cite{UllmanPaper, UllmanThesis}).  
\item For a $G$-spectrum $X$, we denote by $\pi_*^G X$ the integer graded homotopy group of $X$; by $\pi_\star^GX$ the $RO(G)$-graded homotopy group of $X$; by $\Mpi_* X$ the integer-graded Mackey functor valued homotopy group of $X$; and by $\Mpi_\star X$ the $RO(G)$-graded Mackey functor valued homotopy groups of $X$.  
\item All of our spectral sequences are $RO(G)$-graded.  For a fixed $V \in RO(G)$, the ``$(V+t-s,s)$-graded page'' is the part of the spectral sequence consisting of all the classes with degrees $(V+t-s, s)$, where $t, s \in \mathbb{Z}$.  The ``integer-graded page'' is when we set $V = 0$. 

\item For $H\subset G$ and $V \in RO(G)$, $\mathcal{L}_{H, V}$ is the line of slope $(|H|-1)$ on the $(V+t-s, s)$-graded page that is defined by the equation $s = (|H|-1) \cdot (t-s) + |V^H| \cdot |H| - |V|$ (\cref{df:LineLHV}). 

\item For $V \in RO(G)$ and $S$ a collection of subgroups of $G$, $\tau_V(S)$ is the constant $\max_{H \in S} \left(|V^H| \cdot |H| - |V|\right)$ (\cref{df:HmaxHmintau}).

\item For a group $G$, $\mathcal{F}_{\leq h}$ is the family consisting of all subgroups of $G$ of order at most $h$ (\cref{df:orderFamilies}).

\item For a fixed $V \in RO(G)$ and $h \geq 1$, $\mathcal{L}_{h-1}^V$ is the line of slope $(h-1)$ on the $(V+t-s, s)$-graded page that is defined by the equation 
$s = (h-1)(t-s) + \tau_V(\mathcal{F}_{\leq h})$ (\cref{df:LineLVh-1}).

\item For $G$ a finite group and $H \subset G$ a subgroup, $\FF[H]$ is the family consisting of all subgroups of $G$ that do not contain $H$ (\cref{df:FFH}).

\item For $0 \leq i \leq n-1$, $\lambda_{{i}}$ is the 2-dimensional real $C_{p^n}$-representation corresponding to rotation by $\left(\frac{2\pi}{p^{i+1}}\right)$.  In particular, when $p = 2$ and $i = 0$, the representation $\lambda_0$ corresponds to rotation by $\pi$ and is equal to $2\sigma$, where $\sigma$ is the real sign representation of $C_{2^n}$.

\end{enumerate}

\section{The localized slice spectral sequence} \label{sec:LocSliceSS}



In this section, we will analyze the localized slice spectral sequence, generalizing the treatment given in \cite[Section~3]{MeierShiZengHF2}.  The main theorem in this section is \cref{thm:ROGSliceIsom2}, which compares localized slice spectral sequences for different families of subgroups of $G$ and prove that they are isomorphic in a range.  

Let $G$ be a finite group.  A \textit{family} $\FF$ of subgroups of $G$ is a set of subgroups that is closed under subconjugacy: if $H \in \FF$ and $g^{-1}Kg \subset H$ for some $g \in G$, then $K \in \FF$.  We will compare localized slice spectral sequences for different families of subgroups $\FF$, obtaining that they agree in a certain region.  

For $X$ a $G$-spectrum, let $P^\bullet X$ denote the slice tower of $X$.  The spectral sequence associated to the tower 
\[Q^\bullet X := \EF \wedge P^\bullet X\]
is the \textit{localized slice spectral sequence} ($\LSliceSS$) of $X$ with respect to $\FF$.  Here, $\widetilde{E}\FF$ is the cofiber of the canonical map $E\FF_+ \to S^0$, and its fixed points are
\[\EF^H \simeq \left\{\begin{array}{ll} * & \text{if } H \in \FF \\
S^0 &\text{if } H \notin \FF. \end{array} \right.\]

Note that the localized slice spectral sequence of $X$ with respect to $\mathcal{F}$ is \textit{not} the slice spectral sequence of $\EF \wedge X$ because $\EF \wedge P^\bullet X \not \simeq P^\bullet(\EF \wedge X)$ in general.  Nevertheless, the localized slice spectral sequence still computes the equivariant homotopy groups of $\EF \wedge X$.  More precisely, the localized slice spectral sequence is a $RO(G)$-graded spectral sequence of Mackey functors that has the form
\[\E_2^{s,V} = \Mpi_{V-s} \EF \sm P^{|V|}_{|V|}X \,\,\, \Longrightarrow \,\,\, \Mpi_{V-s} \EF \wedge X.\]
The proof of \cite[Theorem~3.1]{MeierShiZengHF2} applies to this more general setting to show that this spectral sequence is strongly convergent.  

When we are considering multiple localized slice spectral sequences with respect to different families, we will denote them by $\EF \sm \SliceSS(X)$ to distinguish between them.  In particular, an inclusion $\FF \subset \FF'$ of families will induce a map $\EF \to \EF'$ of spaces, which will further induce a map 
\[\varphi: \EF \sm \SliceSS(X)  \longrightarrow \EF' \sm \SliceSS(X)\]
of the corresponding localized slice spectral sequences.

Recall that a $G$-spectrum is \emph{$(n+1)$-connective} (or \emph{$n$-connected}) if its Mackey functor valued homotopy groups vanish in degrees $\leq n$.  It is called \emph{connective} if it is $0$-connective (or equivalently, $(-1)$-connected).

\begin{prop}\label{prop:E2IsomROG}
 Let $X$ be a $G$-spectrum.  Suppose $\mathcal{F} \subset \mathcal{F}'$ are families of subgroups such that the set $\mathcal{F}' \setminus \mathcal{F}$ consists only of subgroups conjugate to a fixed subgroup $H \subset G$. Then the following statements hold: 
\begin{enumerate}
\item The map 
\begin{equation} \label{eq:mapPhi}
\varphi: \EF \sm \SliceSS(X)  \longrightarrow \EF' \sm \SliceSS(X)
\end{equation}
induces an isomorphism of classes on the $\mathcal{E}_2$-page in the region defined by the inequality  
\[s > |V^H| - \left\lceil \frac{|V|}{|H|} \right\rceil.\]
\item The map $\varphi$ induces a surjection of classes on the $\mathcal{E}_2$-page in the region defined by the equality 
\[s = |V^H| - \left\lceil \frac{|V|}{|H|} \right\rceil.\]
If $H$ is not subconjugate to $K$, this is an isomorphism on the $G/K$-level. 
\item Assume that $H\subset G$ is normal. If $H \subset K$, then the kernel on the $G/K$-level of the surjection from the preceding part is equal to the image of the transfer from the $G/H$-level. 
\end{enumerate}
\end{prop}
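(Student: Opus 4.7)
The plan is to identify the fiber of $\varphi$ and then reduce the claim to the Hill--Yarnall connectivity of geometric fixed points of slices. A diagram chase on the cofiber sequences $E\FF_+ \to S^0 \to \EF$ and $E\FF'_+ \to S^0 \to \EF'$ shows that the fiber of $\varphi \colon \EF \to \EF'$ is $F := E\FF'_+/E\FF_+$, whose $K$-fixed points are $S^0$ when $K$ is conjugate to $H$ (i.e., $K \in \FF' \setminus \FF$) and contractible otherwise. Smashing the resulting cofiber sequence $F \to \EF \to \EF'$ with $P^{|V|}_{|V|} X$ and taking $\pi^K_{V-s}$ yields a long exact sequence, so the iso/surjection assertions in (1) and (2) reduce to showing that $\pi^K_{V-s-\epsilon}(F \wedge P^{|V|}_{|V|} X) = 0$ for $\epsilon \in \{0,1\}$ and $s$ in the appropriate ranges.

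When $H$ is not subconjugate to $K$, no subgroup $L \subseteq K$ is conjugate to $H$, so $F^L \simeq *$ for every such $L$; hence $F|_K \simeq *$ and the vanishing is automatic, giving the iso clause of (2) for free. The essential case is $H \subseteq K$ (after conjugating, and with $H$ normal when addressing (3)). Here I would first decompose
\[F \simeq \EF_{\overline{H}} \wedge (E\FF_{\leq H})_+,\]
where $\FF_{\overline{H}}$ is the family of subgroups not containing a conjugate of $H$ and $\FF_{\leq H}$ is the family of subgroups subconjugate to $H$; the equivalence is a fixed-point check. The shearing isomorphism for $\EF_{\overline{H}}$, combined with the standard identifications $\Phi^H (E\FF_{\leq H})_+ \simeq E(W_G H)_+$ and $S^V \wedge \EF_{\overline{H}} \simeq S^{V^H} \wedge \EF_{\overline{H}}$, then yields
\[\pi_{V-s}^K(F \wedge P^{|V|}_{|V|} X) \;\cong\; \pi_{|V^H|-s}\bigl((P^{|V|}_{|V|} X)^{\Phi H}_{h(K/H)}\bigr).\]

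The key input is then Hill--Yarnall: a $|V|$-slice has $\Phi^H$-geometric fixed points that are $(\lceil |V|/|H| \rceil - 1)$-connected, and homotopy orbits preserve connectivity. Consequently the right-hand side vanishes precisely when $|V^H| - s < \lceil |V|/|H| \rceil$, i.e., when $s > |V^H| - \lceil |V|/|H| \rceil$; this proves (1), and the same calculation one degree lower gives the surjection in (2). For (3), at the boundary $s = |V^H| - \lceil |V|/|H| \rceil$ the lowest non-trivial homotopy of the homotopy-orbit spectrum is the $K/H$-coinvariants of $\pi_{\lceil |V|/|H| \rceil}(\Phi^H P^{|V|}_{|V|} X)$; the classical identification of the transfer $\Tr_H^K$ with the projection to coinvariants in the lowest homotopy of a homotopy-orbit spectrum then identifies the kernel of $\varphi_*$ with the image of $\Tr_H^K$. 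The main subtle point will be verifying that this shearing-coinvariant description is compatible with the Mackey transfer structure, which follows from standard functoriality of geometric fixed points.
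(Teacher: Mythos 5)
Your overall strategy --- pass to the fiber $F$ of $\EF \to \EF'$ and reduce to the Hill--Yarnall connectivity of $\Phi^H$ of a slice --- is the same as the paper's, and the conclusions you reach are correct, but the shearing-based implementation leaves a gap in (1) and (2). The paper does not compute $\pi_\bullet^K(F\wedge Y)$ level by level; it invokes the isotropy-separation criterion that a $G$-spectrum is $n$-connective iff all of its geometric fixed points $\Phi^L$ are non-equivariantly $n$-connective, so that vanishing of the whole Mackey functor $\Mpi_{V-s-\epsilon}(F\wedge P^{|V|}_{|V|}X)$ reduces to the connectivity of $\Phi^H(P^{|V|}_{|V|}X)$. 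Your shearing formula $\pi^K_{V-s}(F\wedge Y)\cong\pi_{|V^H|-s}\bigl((\Phi^H Y)_{h(K/H)}\bigr)$ presupposes that $H$ is normal in $K$, which (1) and (2) do not assume: for $G=K=S_3$ and $H=\langle(12)\rangle$, $H\subset K$ but $K/H$ is not a group, so the shearing does not apply and your case analysis misses this $K$. The repair is to fall back on the Mackey-functor connectivity criterion, at which point the decomposition $F\simeq\EF_{\overline H}\wedge(E\mathcal{F}_{\le H})_+$ and the Adams/shearing package become unnecessary for those two parts.

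Part (3) is where the two arguments genuinely diverge, and your route is the more structural one. The paper proves surjectivity of $\Tr_H^K$ on $\pi_0^\bullet(S^{-V+s}\wedge F\wedge Y)$ by checking that $C\wedge S^{-V+s}\wedge F\wedge Y$ is $1$-connective, where $C$ is the cofiber of $G/H_+\to G/K_+$ --- an elementary geometric-fixed-points computation hinging on $C^H$ being connected. You instead identify $\pi_0^K$ of the fiber as the $K/H$-coinvariants of $\pi_{\lceil|V|/|H|\rceil}\Phi^H Y$ and observe that the transfer becomes the coinvariants projection, which explains \emph{why} the paper's surjectivity holds. The cost is that the assertion that $\Tr_H^K$ matches, under the Adams and shearing equivalences, the canonical map $\Phi^H Y\to(\Phi^H Y)_{h(K/H)}$ is exactly the delicate content, not a consequence of ``standard functoriality of geometric fixed points''; the paper's construction avoids this entirely by staying on the $G$-side. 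You should also make explicit the step --- carried out in the paper's commuting diagram --- that $\pi_0^H$ of the fiber maps isomorphically onto $\pi_0^H(S^{-V+s}\wedge\EF\wedge Y)$ because $i_H^*\EF'$ is contractible, since that is how surjectivity of the transfer on the fiber level is converted into the desired identification of $\ker\varphi$ with the image of $\Tr_H^K$ on the $\EF$-side.
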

\begin{proof}
For (1), let $F$ be the fiber of the map $\EF \to \EF'$, and consider the long exact sequence in homotopy groups induced by the cofiber sequence 
\[F \wedge P^{|V|}_{|V|}X \longrightarrow \EF \wedge P^{|V|}_{|V|}X \longrightarrow \EF' \wedge P^{|V|}_{|V|}X.\]
The map 
\[\Mpi_{V-s}\left(\EF \wedge P^{|V|}_{|V|}X\right) \longrightarrow \Mpi_{V-s}\left(\EF' \wedge P^{|V|}_{|V|}X\right)\]
is an isomorphism if and only if both $\Mpi_{V-s} \left(F \wedge P^{|V|}_{|V|}X\right)$ and $\Mpi_{V-s-1} \left(F \wedge P^{|V|}_{|V|}X\right)$ vanish.  Note that since 
\[\Mpi_{V-s} \left(F \wedge P^{|V|}_{|V|}X\right) = \Mpi_{0}\left(S^{-V+s} \wedge F \wedge P^{|V|}_{|V|}X\right),\]
it suffices to prove that the $G$-spectrum $S^{-V+s} \wedge F \wedge P^{|V|}_{|V|}X$ is $1$-connective in the region defined by the inequality.

Recall that a $G$-spectrum is $n$-connective if and only if its $K$-geometric fixed points for every subgroup $K \subseteq G$ is non-equivariantly $n$-connective (this can be proved by using induction on the order of $|G|$ and the isotropy separation sequence).  Furthermore, by the definition of $F$, 
\[
\Phi^K (F) \simeq \begin{cases}
S^0 & \text{ if } K \text{ is conjugate to }H,\\
* & \text{ else}.
\end{cases}
\]
Since the functor $\Phi^K(-)$ is symmetric monoidal and $\Phi^K(X) \simeq \Phi^H(X)$ whenever $K$ is conjugate to $H$, we only need to show that the spectrum $\Phi^H\left(S^{-V+s} \wedge F \wedge P^{|V|}_{|V|}X\right)$ is $1$-connective.

We have the equivalence 
\begin{align*}
\Phi^H\left(S^{-V+s} \wedge F \wedge P^{|V|}_{|V|}X\right) &= S^{-|V^H| + s} \wedge S^0 \wedge \Phi^H\left(P^{|V|}_{|V|}X\right) \\
&= S^{-|V^H| + s} \wedge \Phi^H\left(P^{|V|}_{|V|}X\right).
\end{align*}
By \cite[Theorem~2.5]{HillYarnall}, the spectrum $\Phi^H\left(P^{|V|}_{|V|}X\right)$ is $\left \lceil \frac{|V|}{|H|} \right \rceil$-connective.  Therefore, $\Phi^H\left(S^{-V+s} \wedge F \wedge P^{|V|}_{|V|}X\right)$ is $1$-connective when 
\[(-|V|^H + s) + \left \lceil \frac{|V|}{|H|} \right \rceil > 0.\]
It follows that $s > |V|^H - \left \lceil \frac{|V|}{|H|} \right \rceil$, as desired. 

For (2), consider again the long exact sequence in homotopy groups induced by the cofiber sequence 
\[F \wedge P^{|V|}_{|V|}X \longrightarrow \EF \wedge P^{|V|}_{|V|}X \longrightarrow \EF' \wedge P^{|V|}_{|V|}X.\]
When $s = |V^H| - \left\lceil \frac{|V|}{|H|} \right\rceil$, then $s+1 > |V^H| - \left\lceil \frac{|V|}{|H|} \right\rceil$ and $\Mpi_{V-s-1}\left(F \wedge P^{|V|}_{|V|}X\right) = 0$ by the proof of (1).  This implies that the map 
\[\Mpi_{V-s}\left(\EF \wedge P^{|V|}_{|V|}X\right) \longrightarrow \Mpi_{V-s}\left(\EF' \wedge P^{|V|}_{|V|}X\right)\]
is surjective.  It remains to identify the kernel of this map.  To do so, it suffices to identify the kernel of the map 
\begin{equation} \label{eqn:E2pagePi0K}
\Mpi_0\left(S^{-V+s} \wedge \EF \wedge P^{|V|}_{|V|}X\right) \longrightarrow \Mpi_0\left(S^{-V+s} \wedge \EF' \wedge P^{|V|}_{|V|}X\right)
\end{equation}
for subgroups $K$ of $G$.

Note that if $H$ is not subconjugate to $K$, 
then $i_K^* \EF = i_K^* \EF'$ and the map \eqref{eqn:E2pagePi0K}
is an isomorphism.  Therefore, in this case, the kernel is 0.  

For (3), we need to analyze the kernel of the map (\ref{eqn:E2pagePi0K}) when  
$H\subset G$ is normal and $H\subset K$. Without loss of generality, we assume $H\subset K$. 
Since $\mathcal{F}'$ contains $H$, the $H$-spectrum $i_H^* \EF'$ is contractible and so
$\pi_*^H\left(S^{-V+s} \wedge \EF' \wedge P^{|V|}_{|V|}X\right) = 0$.  This implies that the map 
\[\pi_0^H\left(S^{-V+s} \wedge F \wedge P^{|V|}_{|V|}X\right) \longrightarrow \pi_0^H\left(S^{-V+s} \wedge \EF \wedge P^{|V|}_{|V|}X\right)\]
is an isomorphism.  Consider the diagram 
\[
\begin{tikzcd}
\pi_0^K\left(S^{-V+s} \wedge F \wedge P^{|V|}_{|V|}X\right) \ar[r, "\mathbf{1}"] &\pi_0^K\left(S^{-V+s} \wedge \EF \wedge P^{|V|}_{|V|}X\right) \ar[r, twoheadrightarrow, "\mathbf{2}"] & \pi_0^K\left(S^{-V+s} \wedge \EF' \wedge P^{|V|}_{|V|}X\right)  \\ 
\pi_0^H\left(S^{-V+s} \wedge F \wedge P^{|V|}_{|V|}X\right) \ar[r, "\cong"] \ar[u, "\Tr_H^K"] & \pi_0^H\left(S^{-V+s} \wedge \EF \wedge P^{|V|}_{|V|}X\right)  \ar[u, "\Tr_H^K"] &
\end{tikzcd}
\]
Since the top row of the diagram is exact, $\ker(\mathbf{2}) = \im (\mathbf{1})$.  To prove our claim, it suffices to show that the left vertical transfer map 
\[\Tr_H^K: \pi_0^H\left(S^{-V+s} \wedge F \wedge P^{|V|}_{|V|}X\right) \longrightarrow \pi_0^K\left(S^{-V+s} \wedge F \wedge P^{|V|}_{|V|}X\right)\]
is surjective.  

Let $C$ be the cofiber of the map $G/H_+ \to G/K_+$.  The transfer map above is the map 
\[\pi_0^G\left(G/H_+ \wedge S^{-V+s} \wedge F \wedge P^{|V|}_{|V|}X\right) \longrightarrow \pi_0^G\left(G/K_+ \wedge S^{-V+s} \wedge F \wedge P^{|V|}_{|V|}X\right).\]
We will show that $C \wedge S^{-V+s} \wedge F \wedge P^{|V|}_{|V|}X$ is 1-connective.  This can be checked on geometric fixed points for all subgroups, the only relevant case being the $H$-geometric fixed points (because $\Phi^N(F) \simeq *$ when $N$ is not conjugate to $H$).  We have the equality
\[\Phi^H \left(C \wedge S^{-V+s} \wedge F \wedge P^{|V|}_{|V|}X\right) \simeq C^{H} \wedge \Phi^H\left(S^{-V+s} \wedge F \wedge P^{|V|}_{|V|}X\right),\]
where $C^H$ denotes the fixed points of the \textit{space} C.  As proven in part (1), the $H$-geometric fixed points $\Phi^H\left(S^{-V+s} \wedge F \wedge P^{|V|}_{|V|}X\right)$ is 0-connective when we have $s = |V^H| - \left\lceil \frac{|V|}{|H|} \right\rceil$.  For the connectivity of $C^H$, note that $C = G_+ \wedge_K C'$, where $C'$ is the cofiber of of $K$-equivariant map $K/H_+ \to K/K_+$. Since $H\subset G$ is normal, $C^H = \bigvee_{G/K}(C')^H$. The space $(C')^H$ is the cofiber of $(K/H)^H_+ \to S^0$. Since $(K/H)^H =K/H$ is non-empty, this is connected and so is thus $C^H$. It follows that $C^H \wedge \Phi^H\left(S^{-V+s} \wedge F \wedge P^{|V|}_{|V|}X\right)$ is 1-connective, as desired.  
\end{proof}

Fix a $V \in RO(G)$ and consider the $(V+t-s, s)$-graded $\mathcal{E}_2$-page, where $t \in \mathbb{Z}$.  \cref{prop:E2IsomROG} shows that the map $\varphi$ (\ref{eq:mapPhi}) induces an isomorphism of classes in the region defined by the inequality 
\[s> |V^H| + t - \left\lceil\frac{|V|+t}{|H|} \right\rceil,\]
and a surjection of classes in the region defined by the equality 
\[s = |V^H| + t - \left\lceil\frac{|V|+t}{|H|} \right\rceil.\]
To better understand what these regions are geometrically, let $\varepsilon := \left\lceil\frac{|V|+t}{|H|} \right\rceil - \frac{|V|+t}{|H|}$.  Rearranging the terms in the relations above, we deduce that the isomorphism region is defined by the inequality 
\[s > (|H|-1) \cdot (t-s) + |V^H| \cdot |H| - |V| - \varepsilon \cdot |H|,\]
and the surjective region is defined by the equality 
\[s = (|H|-1) \cdot (t-s) + |V^H| \cdot |H| - |V| - \varepsilon \cdot |H|.\]
\begin{df} \rm \label{df:LineLHV}
For $H\subset G$ and $V \in RO(G)$, let $\mathcal{L}_{H, V}$ be the line of slope $(|H|-1)$ on the $(V+t-s, s)$-graded page that is defined by the equation
\[s = (|H|-1) \cdot (t-s) + |V^H| \cdot |H| - |V|.\] 
\end{df}
By the definition of $\varepsilon$, we have the inequality $0 \leq \varepsilon \cdot |H| \leq |H|-1$ (the precise value of $\varepsilon \cdot |H|$ depends on $t$ modulo $|H|$).  Our discussion above implies the following result. 

\begin{cor}\label{cor:E2IsomROG}
The map $\varphi$ (\ref{eq:mapPhi}) induces an isomorphism of classes on the $(V+t-s, s)$-graded $\mathcal{E}_2$-page above the line $\mathcal{L}_{H, V}$, and a surjection of classes on the line $\mathcal{L}_{H, V}$. 
\end{cor}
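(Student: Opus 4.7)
The plan is to obtain this corollary as an essentially immediate geometric repackaging of \cref{prop:E2IsomROG}, using the ceiling manipulation outlined in the paragraph preceding the definition of $\mathcal{L}_{H,V}$.

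First, I would apply \cref{prop:E2IsomROG} with $V$ replaced by the virtual representation $V+t$, where $t\in\mathbb{Z}$ carries trivial $G$-action. Since $|(V+t)^H| = |V^H|+t$ and $|V+t| = |V|+t$, part (1) of the proposition gives an isomorphism of $\mathcal{E}_2$-classes in bidegree $(V+t-s,s)$ whenever $s > |V^H| + t - \lceil (|V|+t)/|H|\rceil$, and part (2) gives a surjection under the corresponding equality. These are precisely the inequalities recorded just after \cref{prop:E2IsomROG}.

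Second, I would introduce the fractional correction $\varepsilon := \lceil (|V|+t)/|H|\rceil - (|V|+t)/|H| \in [0,1)$, so that $0 \le \varepsilon\cdot|H| \le |H|-1$. Multiplying the bound $s > |V^H|+t-(|V|+t)/|H|-\varepsilon$ through by $|H|$ and using $t|H| - t = (|H|-1)t = (|H|-1)(t-s)+(|H|-1)s$ rearranges the isomorphism inequality into the form
\[
s > (|H|-1)(t-s) + |V^H|\cdot|H| - |V| - \varepsilon\cdot|H|,
\]
with the analogous equality characterizing the surjection locus. This is exactly the formula already displayed in the excerpt.

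Third, since $\varepsilon\cdot|H| \ge 0$, the right-hand side above is bounded above by the right-hand side of the defining equation $s = (|H|-1)(t-s) + |V^H|\cdot|H| - |V|$ of $\mathcal{L}_{H,V}$. Consequently, any point strictly above $\mathcal{L}_{H,V}$ lies strictly above the $\varepsilon$-shifted threshold for its value of $t$ and is therefore in the isomorphism region; any point on $\mathcal{L}_{H,V}$ lies on or above the $\varepsilon$-shifted threshold, so by parts (1) and (2) of \cref{prop:E2IsomROG} the map $\varphi$ is at least surjective there (and is actually an isomorphism whenever $\varepsilon>0$, i.e.\ when $|H|\nmid |V|+t$).

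There is no genuine obstacle: the argument is entirely bookkeeping, repackaging the ceiling function as a nonnegative shift $\varepsilon\cdot|H|\in[0,|H|-1]$ and observing that $\mathcal{L}_{H,V}$ is the upper envelope of the $\varepsilon$-dependent threshold lines as $t$ ranges over residues modulo $|H|$. The only modest care needed is to verify the algebraic rearrangement and to confirm that $|(V+t)^H|=|V^H|+t$ so that \cref{prop:E2IsomROG} can be applied verbatim to the shifted grading.
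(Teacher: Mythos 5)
Your proposal is correct and follows essentially the same route as the paper: apply \cref{prop:E2IsomROG} in the $(V+t)$-shifted grading, rewrite the ceiling via $\varepsilon := \lceil (|V|+t)/|H|\rceil - (|V|+t)/|H|$ to get the threshold $s > (|H|-1)(t-s)+|V^H|\cdot|H|-|V|-\varepsilon\cdot|H|$, and observe $\varepsilon\cdot|H|\ge 0$. One small point of tidiness: the parenthetical claim that $\varphi$ is in fact an isomorphism on $\mathcal{L}_{H,V}$ when $\varepsilon>0$ is vacuous, since for any integer lattice point on that line one has $|V|+t = |H|\bigl((t-s)+|V^H|\bigr)$ and hence $\varepsilon=0$; the genuine ``slightly larger'' isomorphism locus lies strictly below the line, as the paper records separately in \cref{cor:E2IsomROG}'s following remark.
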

\begin{rmk} \rm
Note that the actual isomorphism region is slightly bigger than the one given in \cref{cor:E2IsomROG}, as it include some classes slightly below the line $\mathcal{L}_{H, V}$, depending on the value of $\varepsilon \cdot |H|$.
\end{rmk}

\begin{df}\rm \label{df:SSisomRegion}
Suppose $\mathcal{E}$ and $\mathcal{E}'$ are two spectral sequences, and $\mathcal{L}$ is a line on the $(V+t-s, s)$-graded page.  A map $\varphi: \mathcal{E} \to \mathcal{E}'$ induces an \textit{isomorphism of spectral sequences} on or above the line $\mathcal{L}$ if the following statements hold: 
\begin{enumerate}
\item On the $\mathcal{E}_2$-page, $\varphi$ induces an isomorphism above the line $\mathcal{L}$, and a surjection on the line $\mathcal{L}$;  
\item For all $r \geq 2$, every nonzero $d_r$-differential in $\mathcal{E}$ originating on or above the line $\mathcal{L}$ is mapped to a nonzero $d_r$-differential in $\mathcal{E}'$;
\item For all $r \geq 2$, every nonzero $d_r$-differential in $\mathcal{E}'$ originating on or above the line $\mathcal{L}$ is the image of a nonzero $d_r$-differential in $\mathcal{E}$.
\end{enumerate}
\end{df}

For the rest of the paper, an \textit{isomorphism of spectral sequences} will always be understood in the sense of \cref{df:SSisomRegion}.

\begin{thm}\label{thm:ROGSliceIsom1}
Let $X$ be a $G$-spectrum and $V \in RO(G)$.  Suppose $\mathcal{F} \subset \mathcal{F}'$ are families of subgroups such that the set $\mathcal{F}' \setminus \mathcal{F}$ consists only of subgroups conjugate to a fixed subgroup $H \subset G$.  Then on the $(V+t-s, s)$-graded page, the map
\[\varphi\colon \EF \sm \SliceSS(X)  \longrightarrow \EF' \sm \SliceSS(X)\]
induces an isomorphism of spectral sequences on or above the line $\mathcal{L}_{H, V}$. 
\end{thm}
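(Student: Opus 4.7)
The plan is to verify the three conditions of \cref{df:SSisomRegion} in turn. Condition (1) on the $\mathcal{E}_2$-page is precisely \cref{cor:E2IsomROG}, so the work reduces to conditions (2) and (3) concerning differentials. The key geometric input is that for $r \geq 2$, a $d_r$-differential shifts $(V+t-s, s)$ by $(-1, r)$, so if its source lies on or above the line $\mathcal{L}_{H,V}$ of slope $(|H|-1)$, its target sits strictly above $\mathcal{L}_{H,V}$ by at least $r + (|H|-1) \geq 2$. In particular, every target of a relevant differential lies in the strict isomorphism region supplied by \cref{cor:E2IsomROG}.

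To upgrade the $\mathcal{E}_2$-input to control of higher differentials, I would pass to the fiber. Let $F$ denote the fiber of the map $\EF \to \EF'$. Smashing the cofiber sequence $F \to \EF \to \EF'$ with the slice tower produces a fiber sequence of towers
\[F \wedge P^\bullet X \longrightarrow \EF \wedge P^\bullet X \longrightarrow \EF' \wedge P^\bullet X,\]
and hence a long exact sequence of the associated exact couples. The proof of \cref{prop:E2IsomROG}(1) shows that $\Mpi_{V-s}(F \wedge P^{|V|}_{|V|} X) = 0$ whenever $s > |V^H| - \lceil |V|/|H| \rceil$, which translates to the vanishing of the $\mathcal{E}_2$-page of $F \wedge \SliceSS(X)$ at every position strictly above $\mathcal{L}_{H,V}$. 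Since subquotients inherit vanishing, the $\mathcal{E}_r$-page of $F \wedge \SliceSS(X)$ likewise vanishes strictly above $\mathcal{L}_{H,V}$ for every $r \geq 2$.

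Conditions (2) and (3) then fall out of the long exact sequence. For (2), given a nonzero differential $d_r(x) = y$ in $\EF \wedge \SliceSS(X)$ with $x$ on or above $\mathcal{L}_{H,V}$, the target $y$ lies strictly above $\mathcal{L}_{H,V}$, so the vanishing of the fiber's $\mathcal{E}_r$-term at the position of $y$ yields injectivity of $\varphi_r$ there; by naturality $d_r(\varphi_r(x)) = \varphi_r(y) \neq 0$. For (3), given a nonzero differential $d_r(u) = v$ in $\EF' \wedge \SliceSS(X)$ with $u$ on or above $\mathcal{L}_{H,V}$, the vanishing of the fiber's $\mathcal{E}_r$-term at the position reached by the connecting map from $u$ (forced into the strict-above-line region by the direction of $d_r$) yields surjectivity of $\varphi_r$ at $u$; choosing a lift $x$, naturality gives $\varphi_r(d_r(x)) = v \neq 0$, providing the required preimage differential.

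The main obstacle is the bookkeeping of the connecting homomorphisms in the long exact sequence of derived exact couples, specifically verifying that the relevant shift always lands strictly above $\mathcal{L}_{H,V}$ for both the injectivity argument in (2) and the surjectivity argument in (3). This is an explicit degree calculation on the $RO(G)$-bigrading that should be carried out concretely rather than invoked abstractly, especially to confirm that the line's ``strictness by at least $2$'' margin is enough to absorb the connecting shift on every page.
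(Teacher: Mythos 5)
The reduction to the fiber $F$ and the observation that $F \wedge \SliceSS(X)$ has vanishing $\mathcal{E}_2$-page (and hence vanishing $\mathcal{E}_r$-pages) strictly above $\mathcal{L}_{H,V}$ are correct, as is the degree arithmetic showing that targets of $d_r$-differentials from on-or-above the line land strictly above it. But the step where you deduce conditions (2) and (3) from ``the long exact sequence of the associated exact couples'' has a genuine gap. The cofiber sequence of towers gives a long exact sequence of $\mathcal{E}_2$-pages (this is exactly what \cref{prop:E2IsomROG} exploits), but it does \emph{not} give a long exact sequence of $\mathcal{E}_r$-pages for $r \geq 3$: the $\mathcal{E}_{r+1}$-page is the homology of $(\mathcal{E}_r, d_r)$, and homology of a long exact sequence of complexes is not again long exact. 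So vanishing of the fiber's $\mathcal{E}_r$-term at the position of $y$ does not, by itself, give injectivity or surjectivity of $\varphi_r$ at that position.

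The real obstruction your framing hides is that the region strictly above $\mathcal{L}_{H,V}$ is not closed under ``inverse $d_r$'': a $d_r$-differential landing at a position above the line can originate from a position strictly below it, where no isomorphism of $\mathcal{E}_2$-terms is available. This is what forces a page-by-page induction rather than a single fiber-vanishing argument. The paper's proof carries this out: it inducts on $r$, proving the injectivity statement $I_r$ and surjectivity statement $S_r$ simultaneously, using that on the $\mathcal{E}_{r'+1}$-page the map is a bijection only above the $r'$-fold vertical shift $\mathcal{L}_{H,V}^{r'}$ while remaining a surjection on $\mathcal{L}_{H,V}$ itself. Crucially, the induction step rules out the possibility that a class $y$ (or its image $\varphi(y)$) is hit by a \emph{shorter} differential: if $\varphi(y)$ were killed by a $d_{r'}$ with $r' < r$, then $S_{r'}$ would produce a $d_{r'}$-differential in $\mathcal{E}$ hitting $y$, contradicting $d_r(x) = y$. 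Your proposal does not address the case of shorter differentials at all, and that case is exactly where the naive fiber argument breaks. The fiber framing could be made to work, but only by re-running essentially the same induction on $r$ that the paper does; it does not bypass it.
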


\begin{proof}[Proof of \cref{thm:ROGSliceIsom1}]
Statement (1) in \cref{df:SSisomRegion} holds by \cref{cor:E2IsomROG}.  We will abbreviate statement (2) (regarding injectivity) by $I_r$ and statement (3) (regarding surjectivity) by $S_r$.  We will prove by using induction on $r$ that $I_r$ and $S_r$ hold for all $r \geq 2$.  The base case, when $r =2$, is a direct consequence of \cref{prop:E2IsomROG} and \cref{cor:E2IsomROG}.  For the induction step, let $r\geq 3$ and assume that $I_{r'}$ and $S_{r'}$ have already been proved for all $r' < r$. 

Let the line $\mathcal{L}_{H, V}^{r'}$ be the ${r'}$-fold vertical shift of $\mathcal{L}_{H, V}$.  Note that \cref{cor:E2IsomROG}, together with statements $I_{r'}$ and $S_{r'}$, imply that on the $\E_{r'+1}$-page, the map $\varphi\colon \mathcal{E}_{r'+1}^{s,V+t} \to {\mathcal{E}'}_{r'+1}^{s,V+t}$ is a bijection if $(V+t-s, s)$ is above the line $\mathcal{L}_{H, V}^{r'}$, and a surjection if $(V+t-s, s)$ is on the line $\mathcal{L}_{H, V}$.

To prove $I_r$, let $d_r(x) = y$ be a nonzero $d_r$-differential that is originating on or above $\mathcal{L}_{H, V}$ in $\mathcal{E}$.  By the induction hypothesis discussed in the previous paragraph and naturality, $\varphi(y)$ must be killed by a differential of length at most $r$ in $\mathcal{E}'$.  If the length of this differential is $r$, then the source must be $\varphi(x)$ by naturality and we are done.  If this differential is of length $r' < r$, then by $S_{r'}$, there must be a $d_{r'}$-differential in $\mathcal{E}$ whose target is $y$.  This is a contradiction.  

To prove $S_r$, let $d_r(x') = y'$ be a nonzero $d_r$-differential in $\mathcal{E}'$ that is originating on or above the line $\mathcal{L}_{H, V}$.  Let $x$ be a pre-image of $x'$.  By naturality, $x$ must support a differential of length at most $r$.  If this differential is of length exactly $r$, then by naturality and the induction hypothesis, the target of this differential must be $y$, the unique preimage of $y'$.  If this differential of length $r' < r$, then by $I_{r'}$, the class $x'$ must support a differential of length smaller than $r$ as well.  This is a contradiction. 
\end{proof}

We are now ready to compare localized slice spectral sequences for general inclusions $\mathcal{F} \subset \mathcal{F}'$ of families.  

\begin{df}\rm \label{df:HmaxHmintau}
Suppose $V \in RO(G)$, and $S$ is a collection of subgroups of $G$.  Define 
\[\tau_V(S) = \max_{H \in S} \left(|V^H| \cdot |H| - |V|\right).\]
\end{df} 

\begin{thm}[Comparison Theorem]\label{thm:ROGSliceIsom2}
Let $X$ be a $G$-spectrum, and suppose $\mathcal{F} \subset \mathcal{F}'$ is an inclusion of families.  Denote the orders of the largest and the smallest subgroups in $\mathcal{F}'\setminus \mathcal{F}$ by $|H|_{\max}$ and $|H|_{\min}$, respectively.  On the $(V+t-s, s)$-graded page, the map 
\[\varphi: \EF \wedge \SliceSS(X) \longrightarrow \EF' \wedge \SliceSS(X)\]
induces an isomorphism of spectral sequences within the region $\mathcal{R}_{\mathcal{F}, \mathcal{F}'}$ that is defined by the following inequalities (see \cref{fig:PicComparisonThm}):
\[\left\{\begin{array}{ll}
s \geq (|H|_{\max}-1)(t-s) + \tau_V(\mathcal{F}'\setminus \mathcal{F}), & \text{when } t-s \geq 0,\\
s \geq (|H|_{\min}-1)(t-s) + \tau_V(\mathcal{F}'\setminus \mathcal{F}), & \text{when } t-s < 0.
\end{array} \right.\]
\end{thm}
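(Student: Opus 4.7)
The plan is to reduce to the single-conjugacy-class case established in \cref{thm:ROGSliceIsom1} by filtering the inclusion $\FF \subset \FF'$ through a chain of intermediate families. Enumerate the conjugacy classes of subgroups in $\FF' \setminus \FF$ as $[H_1], [H_2], \ldots, [H_k]$ with $|H_1| \leq |H_2| \leq \cdots \leq |H_k|$, and define $\FF_0 := \FF$ and $\FF_i := \FF_{i-1} \cup [H_i]$. The size ordering ensures each $\FF_i$ is itself a family, since any proper subconjugate of $H_i$ has order strictly less than $|H_i|$, and thus either already lies in $\FF$ or is conjugate to some $H_j$ with $j < i$, which has already been added.

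By \cref{thm:ROGSliceIsom1}, each induced map
\[
\varphi_i \colon \widetilde{E}\FF_{i-1} \sm \SliceSS(X) \longrightarrow \widetilde{E}\FF_i \sm \SliceSS(X)
\]
is an isomorphism of spectral sequences on or above the line $\mathcal{L}_{H_i, V}$, which has slope $|H_i|-1$ and meets the vertical axis $t-s=0$ at height $c_i := |V^{H_i}| \cdot |H_i| - |V|$. The total map $\varphi$ is the composition $\varphi_k \circ \cdots \circ \varphi_1$; a naturality argument in the spirit of the induction in the proof of \cref{thm:ROGSliceIsom1} shows that a composition of isomorphisms of spectral sequences in the sense of \cref{df:SSisomRegion} is again an isomorphism of spectral sequences on the intersection of the individual regions.

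It then remains to verify the geometric claim that $\mathcal{R}_{\FF, \FF'}$ lies inside $\bigcap_{i=1}^k \{s \geq (|H_i|-1)(t-s) + c_i\}$. For $t-s \geq 0$, the hypothesis $s \geq (|H|_{\max}-1)(t-s) + \tau_V(\FF'\setminus\FF)$ implies each individual bound: $|H|_{\max} \geq |H_i|$ combined with $t-s \geq 0$ yields $(|H|_{\max}-1)(t-s) \geq (|H_i|-1)(t-s)$, and $\tau_V(\FF'\setminus\FF) \geq c_i$ by the definition of $\tau_V$. For $t-s < 0$ the slope inequality reverses, so one must replace $|H|_{\max}$ by $|H|_{\min}$ and use $|H|_{\min} \leq |H_i|$ together with $(t-s) < 0$; the intercept comparison $\tau_V \geq c_i$ remains valid, completing the containment.

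The main subtlety I foresee is a careful treatment of the boundary of $\mathcal{R}_{\FF, \FF'}$: since the boundary is piecewise linear and not any single $\mathcal{L}_{H_i, V}$, the conditions of \cref{df:SSisomRegion} cannot be invoked on the region's boundary in a single stroke. In the interior of $\mathcal{R}_{\FF, \FF'}$ the composition-of-isomorphisms argument is unobstructed; on the boundary one identifies, at each point, an index $i$ for which the point lies on (or strictly above) $\mathcal{L}_{H_i, V}$ and applies the surjection clause of \cref{df:SSisomRegion} at that stage while enjoying the full isomorphism condition at every other stage. This bookkeeping is the technically most delicate part of the proof but introduces no new ideas beyond those already developed.
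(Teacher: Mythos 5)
Your proposal follows essentially the same route as the paper: factor $\FF \subset \FF'$ through a chain of one-conjugacy-class inclusions, apply \cref{thm:ROGSliceIsom1} at each step, and conclude by checking that $\mathcal{R}_{\FF,\FF'}$ lies on or above each of the lines $\mathcal{L}_{H_i,V}$. You supply more explicit detail than the paper does (the size-ordering argument that each intermediate $\FF_i$ is indeed a family, the observation that compositions of isomorphisms in the sense of \cref{df:SSisomRegion} compose correctly, and the piecewise-linear boundary bookkeeping), but these fill in the same skeleton rather than changing the approach.
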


\begin{figure}
\begin{center}
\makebox[\textwidth]{\includegraphics[trim={0cm 0cm 0cm 0cm}, clip, scale = 0.75]{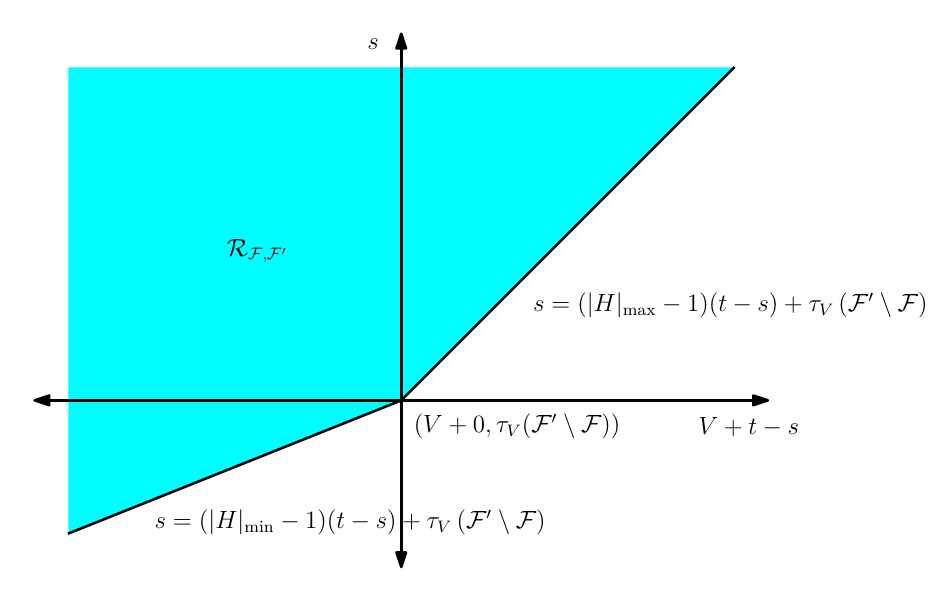}}
\caption{The isomorphism region $\mathcal{R}_{\mathcal{F}, \mathcal{F}'}$ for the map $\varphi$.}
\hfill
\label{fig:PicComparisonThm}
\end{center}
\end{figure}

\begin{proof}
Let 
\[\mathcal{F} = \mathcal{F}_0 \subset \mathcal{F}_1 \subset \cdots \subset \mathcal{F}_n = \mathcal{F}'\]
be a sequence of inclusions such that $\mathcal{F}_i \setminus \mathcal{F}_{i-1}$ consists only of subgroups conjugate to a fixed subgroup $H_i \subset G$.  It is clear that $|H|_{\max} = \max |H_i|$, $|H|_{\min} = \min |H_i|$, and $\tau_V(\mathcal{F}'\setminus \mathcal{F})= \max_{i} |V^{H_i}| \cdot |H_i| - |V|$.

The inclusion of families induces maps of localized slice spectral sequences
\[\EF_0 \wedge \SliceSS(X) \longrightarrow \EF_1 \wedge \SliceSS(X) \longrightarrow \cdots \longrightarrow \EF_n \wedge \SliceSS(X).\]
\cref{thm:ROGSliceIsom1} shows that for $1 \leq i \leq n$, the map 
\[\EF_{i-1} \wedge \SliceSS(X) \longrightarrow \EF_i \wedge \SliceSS(X)\]
induces an isomorphism of spectral sequences on or above the line $\mathcal{L}_{H_i, V}$, which is the region that is defined by the inequality 
\[s \geq (|H_i|-1) \cdot (t-s) + |V^{H_i}| \cdot |H_i| - |V|.\]
The theorem now follows from the fact that the region $\mathcal{R}_{\mathcal{F}, \mathcal{F}'}$ is on or above the line $\mathcal{L}_{H_i, V}$ for all $1 \leq i \leq n$.  
\end{proof}

\begin{rmk}\rm
In the special case when $V = 0$, $\tau_V(\FF'\setminus \FF)$ is equal to 0.  \cref{thm:ROGSliceIsom2} implies that on the integer-graded page, the isomorphism region is the region defined by the inequalities
\[\left\{\begin{array}{ll}
s \geq (|H|_{\max}-1)(t-s), & \text{when } t-s \geq 0,\\
s \geq (|H|_{\min}-1)(t-s), & \text{when } t-s < 0.
\end{array} \right.\]
\end{rmk}

\section{A stratification of the slice spectral sequence}\label{sec:Stratification}

Let $\{\mathcal{F}\}$ be the poset of all families of subgroups of $G$, ordered by inclusion.  As discussed in the previous section, for each inclusion $\mathcal{F} \subset \mathcal{F}'$, there is a map 
\[\EF \wedge \SliceSS(X) \longrightarrow \EF' \wedge \SliceSS(X).\]
Therefore, the poset $\{\FF\}$ produces a poset $\{\EF \wedge \SliceSS(X)\}$ of localized slice spectral sequences.  For each pair of localized slice spectral sequences that is connected by an edge in this poset, the Comparison Theorem (\cref{thm:ROGSliceIsom2}) shows that the two spectral sequences agree in a region.  

In this section, we will discuss a stratification of the entire slice spectral sequence that arises from a chain in this poset.  

\begin{df}\rm \label{df:orderFamilies}
For $0 \leq h \leq |G|$, let $\mathcal{F}_{\leq h}$ be the family consisting of all subgroups of $G$ of order $\leq h$. 
\end{df}
In \cite{UllmanThesis}, the families $\mathcal{F}_{\leq h}$ are called the \textit{order families} of $G$.  Note from definition, $\mathcal{F}_{\leq 0}$ is the empty family, $\mathcal{F}_{\leq |G|-1}$ is the family $\mathcal{P}$ consisting of all proper subgroups of $G$, and $\mathcal{F}_{\leq |G|}$ is the family consisting of all subgroups of $G$.  The chain of inclusions 
\[\FF_{\leq 0} \subset \FF_{\leq 1} \subset \cdots \subset \FF_{\leq |G|}\]
induces the maps
\[\EF_{\leq 0} \longrightarrow \EF_{\leq 1} \longrightarrow \cdots \EF_{\leq |G|-1} \longrightarrow \EF_{\leq |G|}.\]
Since $\EF_{\leq 0} \simeq S^0$ and $\EF_{\leq |G|} \simeq *$, the maps above induces a chain 
\begin{equation}\label{eq:LocalizedSSchain}
\SliceSS(X) \longrightarrow \EF_{\leq 1} \wedge \SliceSS(X)\longrightarrow \cdots \longrightarrow \EF_{\leq |G|-1} \wedge \SliceSS(X) \longrightarrow *
\end{equation}
of the corresponding localized slice spectral sequences.  

Recall from \cref{df:HmaxHmintau} that for $S$ a collection of subgroups of $G$,
\[\tau_V(S) := \max_{H \in S} \left(|V^H| \cdot |H| - |V|\right).\]

\begin{df}\rm\label{df:LineLVh-1}
For a fixed $V \in RO(G)$ and $h \geq 1$, let $\mathcal{L}_{h-1}^V$ represent the line of slope $(h-1)$ on the $(V+t-s, s)$-graded page that is defined by the equation 
\[s = (h-1)(t-s) + \tau_V(\mathcal{F}_{\leq h}).\]
\end{df}

There are two special cases worth noting: 
\begin{enumerate}
\item When $V = 0$, we have $\tau_0(\mathcal{F}_{\leq h})=0$, and $\mathcal{L}_{h-1}^0$ is the line of slope $(h-1)$ through the origin on the integer-graded page.  In this case, we will denote this line by $\mathcal{L}_{h-1}$.  
\item When $h = 1$, we have $\tau_V(\mathcal{F}_{\leq 1}) = |V^e| \cdot 1 - V = 0$.  In this case, the line $L^V_0$ is the horizontal line $s = 0$.
\end{enumerate}

The following result is an immediate consequence from \cref{thm:ROGSliceIsom2} by setting $\mathcal{F} = \varnothing$ and $\FF' = \FF_{\leq h}$.  

\begin{thm}[Slice Recovery Theorem]\label{thm:SliceRecovery1}
Let $X$ be a $G$-spectrum, and let $h \geq 1$.  On the $(V+t-s, s)$-graded page where $t-s \geq 0$, the map
\[\varphi_h: \SliceSS(X)  \longrightarrow \EF_{\leq h} \sm \SliceSS(X)\]
induces an isomorphism of spectral sequences on or above the line $\mathcal{L}_{h-1}^V$.  In the region where $t-s < 0$, $\varphi_h$ induces an isomorphism of spectral sequences on or above the horizontal line $s= \tau_V(\mathcal{F}_{\leq h})$.
\end{thm}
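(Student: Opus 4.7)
The plan is to derive the Slice Recovery Theorem as an immediate specialization of the Comparison Theorem (\cref{thm:ROGSliceIsom2}) with $\mathcal{F} = \varnothing$ and $\mathcal{F}' = \mathcal{F}_{\leq h}$. Since $\EF_{\leq 0} \simeq S^0$ by the fixed-point description of $\widetilde{E}\varnothing$ stated at the start of \cref{sec:LocSliceSS}, the localized slice spectral sequence $\EF_{\leq 0} \wedge \SliceSS(X)$ is naturally identified with $\SliceSS(X)$, and the comparison map furnished by \cref{thm:ROGSliceIsom2} becomes exactly $\varphi_h$. The remaining work is bookkeeping: I need to verify that the isomorphism region supplied by \cref{thm:ROGSliceIsom2} contains the region claimed in the present statement.

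First I identify the invariants attached to the pair $(\varnothing, \mathcal{F}_{\leq h})$. The set-theoretic difference $\mathcal{F}_{\leq h} \setminus \varnothing$ equals $\mathcal{F}_{\leq h}$ itself, which always contains the trivial subgroup, so $|H|_{\min} = 1$; the largest element has order $|H|_{\max} \leq h$, with equality exactly when $G$ admits a subgroup of order $h$. The constant $\tau_V(\mathcal{F}_{\leq h})$ is defined identically in both statements. For the half-plane $t - s < 0$, substituting $|H|_{\min} = 1$ into the second inequality of \cref{thm:ROGSliceIsom2} collapses the slope to zero and yields the region $s \geq \tau_V(\mathcal{F}_{\leq h})$, which is exactly the horizontal line appearing in the theorem. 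For $t - s \geq 0$, \cref{thm:ROGSliceIsom2} produces an isomorphism on or above the line of slope $|H|_{\max} - 1$ with intercept $\tau_V(\mathcal{F}_{\leq h})$; since $|H|_{\max} \leq h$ and $t - s \geq 0$, this line lies on or below $\mathcal{L}_{h-1}^V$, so every point on or above $\mathcal{L}_{h-1}^V$ is automatically in the isomorphism region.

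I do not anticipate any substantive obstacle: all the analytic content — the fiber connectivity estimate via the Hill--Yarnall theorem in \cref{prop:E2IsomROG}, the inductive transfer across pages in \cref{thm:ROGSliceIsom1}, and the telescoping of successive inclusions in \cref{thm:ROGSliceIsom2} — has already been carried out upstream. The one mild subtlety worth flagging is that the stated region is not always sharp: when $G$ has no subgroup of order exactly $h$, the true isomorphism region is strictly larger, bounded by the shallower line of slope $|H|_{\max} - 1 < h - 1$. Presenting the result uniformly in terms of the slope $h - 1$ is nonetheless the correct normalization for assembling the stratification tower of \cref{thm:introThm1IntegerStratification}, where the index $h$ ranges over all integers in $[0, |G|]$ rather than only over the orders of subgroups of $G$.
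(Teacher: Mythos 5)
Your proposal is correct and matches the paper's own proof, which likewise derives the theorem as an immediate specialization of \cref{thm:ROGSliceIsom2} with $\mathcal{F} = \varnothing$ and $\mathcal{F}' = \mathcal{F}_{\leq h}$. The bookkeeping you supply ($|H|_{\min}=1$ because the trivial subgroup is in $\mathcal{F}_{\leq h}$; $|H|_{\max}\leq h$ so the isomorphism boundary lies on or below $\mathcal{L}_{h-1}^V$ when $t-s\geq 0$), together with your sharpness remark, is exactly the content the paper leaves implicit.
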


\begin{cor}\label{cor:sliceRecoveryExample1}
The map 
\[\varphi_1: \SliceSS(X) \longrightarrow \widetilde{E}G \wedge \SliceSS(X)\]
 induces an isomorphism of spectral sequences on or above the horizontal line $s = 0$ on all the $(V+t-s, s)$-graded pages.  
\end{cor}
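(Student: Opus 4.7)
The plan is to derive this corollary as the $h = 1$ specialization of the Slice Recovery Theorem (\cref{thm:SliceRecovery1}). First I would observe that the family $\mathcal{F}_{\leq 1}$ consists solely of the trivial subgroup, so $\widetilde{E}\mathcal{F}_{\leq 1} \simeq \widetilde{E}G$ in the standard notation, and the map $\varphi_1$ in the statement is literally the map appearing in \cref{thm:SliceRecovery1}. The entire content therefore reduces to a numerical check: I need to verify that the two bounding lines produced by the theorem both degenerate to the horizontal line $s = 0$, regardless of the choice of $V \in RO(G)$.

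The key computation is the value of $\tau_V(\mathcal{F}_{\leq 1})$. Since the only subgroup present is the trivial subgroup $e$, and since $V^e = V$ gives $|V^e| = |V|$, \cref{df:HmaxHmintau} yields
\[
\tau_V(\mathcal{F}_{\leq 1}) \;=\; |V^e|\cdot 1 - |V| \;=\; 0.
\]
Plugging this into \cref{df:LineLVh-1} with $h = 1$, the line $\mathcal{L}_{0}^V$ is defined by $s = 0 \cdot (t-s) + 0$, i.e.\ the horizontal line $s=0$; and in the region $t-s < 0$, the theorem's bound is the horizontal line $s = \tau_V(\mathcal{F}_{\leq 1}) = 0$ as well. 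So both halves of \cref{thm:SliceRecovery1} deliver the same bound, and piecing them together yields an isomorphism of spectral sequences on or above $s = 0$ on every $(V+t-s, s)$-graded page. There is no real obstacle here; the only subtle point is recognizing that $|V^e| - |V| = 0$ for every virtual representation $V$, which is precisely what makes $h = 1$ behave uniformly in $V$.
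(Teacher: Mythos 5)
Your proof is correct and follows essentially the same route as the paper: specialize the Slice Recovery Theorem to $h=1$, note that $\mathcal{F}_{\leq 1}=\{e\}$ so $\widetilde{E}\mathcal{F}_{\leq 1}\simeq \widetilde{E}G$, and observe that $\tau_V(\mathcal{F}_{\leq 1}) = |V^e| - |V| = 0$ forces both the $t-s\geq 0$ bound ($\mathcal{L}_0^V$) and the $t-s<0$ bound to degenerate to the horizontal line $s=0$.
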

\begin{proof}
This follows directly from the discussion after \cref{df:LineLVh-1} and \cref{thm:SliceRecovery1} by setting $h = 1$.
\end{proof}

\begin{construction}[Stratification tower]\rm \label{construction:Tower}
From \cref{df:orderFamilies}, it is clear that the families $\FF_{\leq h-1}$ and $\FF_{\leq h}$ differ only when there exists a subgroup $H \subset G$ of order $h$.  This observation, combined with \cref{thm:SliceRecovery1}, shows that the slice spectral sequence of $X$ can be divided into distinct regions, corresponding to different orders of subgroups of $G$.  As we transition from one region to the next, the behaviors of the slice differentials undergoes a change.  More precisely, let $1 = h_0 < h_1 < \cdots < h_k < |G|$ denote the different orders of subgroups of $G$.  By \cref{thm:SliceRecovery1}, the chain (\ref{eq:LocalizedSSchain}) gives rise to a tower of spectral sequences that stratifies the original slice spectral sequence.  

\begin{equation} \label{diagram:SliceSSTower}
\begin{tikzcd}
\SliceSS(X) \ar[rr, "\mathcal{L}_{0}^V"] \ar[rrd, swap, "\mathcal{L}_{h_1-1}^V"] \ar[rrddd, swap, bend right = 7, "\mathcal{L}_{h_{k-1}-1}^V"] \ar[rrdddd, bend right = 35, swap, "\mathcal{L}_{h_k-1}^V"]&& \widetilde{E} G \wedge \SliceSS(X) \ar[d]  \\
&& \EF_{\leq h_1} \wedge \SliceSS(X) \ar[d]  \\
&& \vdots \ar[d] \\ 
&& \EF_{\leq h_{k-1}} \wedge \SliceSS(X) \ar[d] \\
&& \EF_{\leq h_k} \wedge \SliceSS(X) 
\end{tikzcd}
\end{equation}
The distinct regions are separated by the lines $\mathcal{L}_{h_i-1}^V$ of slope $(h_i-1)$, $1 \leq i \leq k$. 
 As shown in \cref{thm:SliceRecovery1}, on the $(V+t-s, s)$-graded page where $t-s \geq 0$, all the slice differentials on or above the line $\mathcal{L}_{h_i-1}^V$ can be fully recovered from the localized slice spectral sequence $\EF_{\leq h_i} \wedge \SliceSS(X)$.  
\end{construction}

To gain deeper insights into the stratification tower (\ref{diagram:SliceSSTower}), we will establish the following result, which proves vanishing properties in the $RO(G)$-graded slice spectral sequence.  More precisely, we will show that classes are concentrated in certain conical regions.  Historically, these regions have only been explicitly identified for the integer-graded slice spectral sequence in \cite[Section~4]{HHR}.  

\begin{thm}[Positive Cone]\label{thm:SliceSSVanishingRegion}
Consider a connective $G$-spectrum $X$ and let $V \in RO(G)$.  Define $k(V)$ to be the smallest non-negative integer such that the representation ${\left(k(V)\cdot \rho_G + V\right)}$ becomes an actual $G$-representation (if $V$ is already an actual representation, then ${k(V) = 0}$).  On the $(V+t-s, s)$-graded $\mathcal{E}_2$-page of the slice spectral sequence of $X$, all the classes are concentrated within the region defined by the following inequalities:
\begin{align*}
s & \geq -|V|-k(V)\cdot|G|, \\
s & \leq (|G|-1)(t-s) - |V|+ |G|\cdot \max_{H \subseteq G} |V^H|. 
\end{align*}
\end{thm}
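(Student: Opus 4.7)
The plan is to treat the two inequalities separately, each by establishing vanishing of the $\mathcal{E}_2$-page term
\[\mathcal{E}_2^{s,\, V+t} \;=\; \Mpi_{V+t-s}\bigl(P^{|V|+t}_{|V|+t} X\bigr)\]
outside the stated region.

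For the upper bound, I appeal to the Hill--Yarnall connectivity theorem that $\Phi^K\bigl(P^n_n X\bigr)$ is $\lceil n/|K|\rceil$-connective (already invoked in the proof of \cref{prop:E2IsomROG}), together with the standard criterion that an equivariant spectrum is $1$-connective if and only if each of its geometric fixed points is non-equivariantly $1$-connective. Writing $W = V+t-s$ and $n = |V|+t$, vanishing of $\Mpi_W(P^n_n X)$ is forced once $\lceil n/|K|\rceil > |W^K|$ for every subgroup $K \subseteq G$, which unpacks to the family of inequalities
\[s \;>\; (|K|-1)(t-s) + |K|\cdot|V^K| - |V|.\]
A brief monotonicity check shows that in the half-plane $t-s \geq 0$ (and assuming $\max_K|V^K| \geq 0$, as happens whenever $V$ meets $k(V)=0$), the single line $s = (|G|-1)(t-s) - |V| + |G|\max_{K \subseteq G} |V^K|$ lies above every per-$K$ line, so strict inequality against this one line implies vanishing and hence gives the upper bound.

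For the lower bound, I first reduce to the case that $V$ is an actual $G$-representation by invoking the slice periodicity $\Sigma^{\rho_G} P^n_n Y \simeq P^{n+|G|}_{n+|G|}(\Sigma^{\rho_G}Y)$ from \cite{HHR}. Suspending by $k(V)\rho_G$ replaces $V$ by the actual representation $V' = V + k(V)\rho_G$, replaces $X$ by the still-connective $\Sigma^{k(V)\rho_G} X$, and converts the target bound $s \geq -|V| - k(V)|G|$ into $s \geq -|V'|$. Assuming from now on that $V$ is actual, the vanishing of $\Mpi_{V+t-s}(P^n_n X)$ for $s < -|V|$ follows from two complementary inputs: first, $P^n_n X = 0$ whenever $n < 0$, since $X$ is connective; and second, for $n \geq 0$ the Hill--Hopkins--Ravenel slice structure theorem writes $P^n_n X$ as a wedge of generalized Eilenberg--MacLane pieces of the form $G_+\wedge_H \Sigma^{k\rho_H} H\underline{M}$ with $k|H|=n$ (together with their odd-slice analogs), whose $RO(G)$-graded Mackey-functor homotopy is computable as the Bredon homology of a point and vanishes in the required range once $V$ is actual.

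The main obstacle is the lower bound. The upper bound drops out essentially mechanically from Hill--Yarnall connectivity, but the lower bound requires the finer slice-cell decomposition of $P^n_n X$ for a connective $X$ together with an $RO(G)$-graded Bredon cohomology computation for the pieces $G_+\wedge_H \Sigma^{k\rho_H} H\underline{M}$. The correction term $k(V)|G|$ in the bound appears precisely as the regular-representation shift needed to pass from a virtual $V$ to one where this $H\underline{M}$-analysis is clean.
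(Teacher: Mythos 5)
Your upper-bound strategy starts from the same inputs as the paper (Hill--Yarnall connectivity of $\Phi^H\bigl(P^n_n X\bigr)$ plus the geometric-fixed-point criterion for connectivity), and your per-$K$ inequality $s > (|K|-1)(t-s) + |K|\,|V^K| - |V|$ is the correct unpacking. But the final ``monotonicity check'' that the single line with $|G|$ and $\max_K|V^K|$ dominates every per-$K$ line genuinely requires $\max_K|V^K|\geq 0$, and your parenthetical restricting to $k(V)=0$ does not close the gap: the theorem is stated for arbitrary $V\in RO(G)$, and for (say) $V=-\rho_G$ one has $\max_K|V^K|<0$, in which case the single line can sit \emph{below} a per-$K$ line. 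The paper's proof avoids the line comparison entirely: it bounds
\[
\min_{H\subseteq G}\left(-|V^H|+\left\lceil\tfrac{|V|+t}{|H|}\right\rceil\right)\;\geq\;\min_{H}(-|V^H|)+\min_{H}\left\lceil\tfrac{|V|+t}{|H|}\right\rceil\;\geq\;-\max_{H}|V^H|+\tfrac{|V|+t}{|G|},
\]
using only $|V|+t\geq 0$ (if $|V|+t<0$ the slice is zero anyway). This gives the stated bound with no sign assumption on $\max_K|V^K|$, and with no restriction to $t-s\geq 0$.

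The lower bound is where your proposal has the serious error. There is no ``Hill--Hopkins--Ravenel slice structure theorem'' that writes $P^n_n X$ as a wedge of pieces $G_+\wedge_H\Sigma^{k\rho_H}H\underline{M}$ for an \emph{arbitrary} connective $G$-spectrum $X$; that Slice Theorem is special to $\MUG$ (and certain quotients) and fails already for, e.g., the $G$-equivariant sphere. So the proposed Bredon-homology computation of the pieces has nothing to be applied to. The paper instead proves a purely formal coconnectivity statement (\cref{lem:SliceCoconnectivity}): for $Z$ slice $\leq n$ with $n\geq 0$ and $W$ an actual representation, $S^{-W}\wedge Z$ is slice $\leq n$ and $\Mpi_k\bigl(S^{-W}\wedge Z\bigr)=0$ for $k>n$. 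This is proved by pairing against slice cells and using \cite[Lemma~4.14, Proposition~4.40]{HHR}, with no structure theorem for slices required. Your preliminary reduction via slice periodicity to the case $V$ actual --- replacing $V$ by $V' = V + k(V)\rho_G$ and $X$ by $\Sigma^{k(V)\rho_G}X$ --- matches the paper and is correct; it is the step after that which needs to be replaced by the coconnectivity lemma.
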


\begin{rmk}\rm 
\label{rmk:positiveCone}
The inequalities in \cref{thm:SliceSSVanishingRegion} define a conical region on each $RO(G)$-graded page.  This region is bounded by a horizontal line of filtration ${(-|V|-k(V)\cdot|G|)}$ and a line of slope ${(|G|-1)}$.  On the integer-graded page, where $V=0$ and ${k(V) = 0}$, this region is the cone bounded by the inequalities $s \geq 0$ and ${s \leq (|G|-1)(t-s)}$.  This cone is commonly referred to as the ``positive cone''.  Given this, we will use the term ``positive cone'' to describe the region defined on each $RO(G)$-graded page.  \cref{thm:SliceRecovery1} and \cref{construction:Tower} show that the tower (\ref{diagram:SliceSSTower}) further stratifies the positive cone into distinct regions separated by lines of slope $(|H|-1)$, where $H$ ranges over the subgroups of $G$.  
\end{rmk}

\begin{rmk}\rm
When $V = 0$, the stratification tower provides the cleanest stratification of the positive cone.  In this case, the positive cone is bounded by the inequalities $s \geq 0$ and $s \leq (|G|-1)(t-s)$, and the line $\mathcal{L}_{h_i-1}^0$ is the line of slope $(h_i-1)$ passing through the origin for all $0 \leq i \leq k$ (see remark after \cref{df:LineLVh-1}).  These lines all intersect at the origin and completely stratify the positive cone (see \cref{fig:PicStratificationTowerV=0}).
\begin{figure}
\begin{center}
\makebox[\textwidth]{\hspace{-0.5in}\includegraphics[trim={0cm 0cm 0cm 0cm}, clip, scale = 0.6]{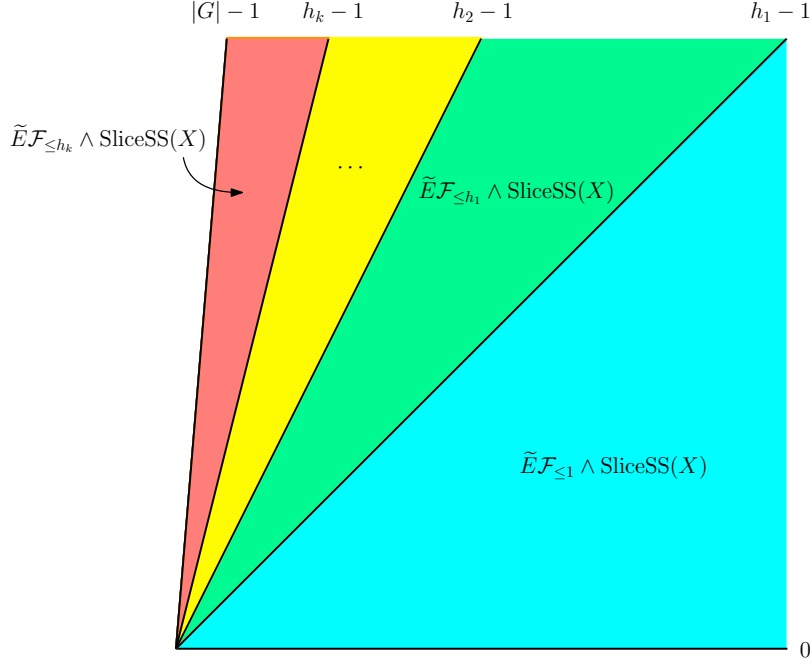}}
\caption{Stratification regions of the slice spectral sequence when $V = 0$.}
\hfill
\label{fig:PicStratificationTowerV=0}
\end{center}
\end{figure}
\end{rmk}

\begin{rmk}\rm
In general, for any $V \in RO(G)$, the positive cone is bounded by the inequalities 
\begin{align*}
s & \geq -|V|-k(V)\cdot|G| \\
s & \leq (|G|-1)(t-s) - |V|+ |G|\cdot \max_{H \subseteq G} |V^H|
\end{align*}
as proven in \cref{thm:SliceSSVanishingRegion}.  The localized slice spectral sequence $\EF_{\leq 1} \wedge \SliceSS(X)$ will recover everything in $\SliceSS(X)$ that is on or above the line $\mathcal{L}_0^V$, which is the horizontal line $s = 0$ (see \cref{fig:PicStratificationTower}).  It is worth noting that, unlike the $V = 0$ case, the lines $\mathcal{L}_{h_i-1}^V$ for $0 \leq i \leq k-1$ won't necessarily all intersect at the same point.

Furthermore, it is interesting to observe that in this scenario, the localized slice spectral sequences do not completely capture all the information present in the original slice spectral sequence.  More specifically, they do not capture the information between the lines $s = -|V|-k(V)\cdot|G|$ and $s = 0$ (the gray region in \cref{fig:PicStratificationTower}).  Understanding the behavior of the slice spectral sequence in this region requires using the generalized Tate diagram of the equivariant slice filtration, a topic that will be explored in an upcoming paper with Yutao Liu and Guoqi Yan. 

\begin{figure}
\begin{center}
\makebox[\textwidth]{\includegraphics[trim={0cm 0cm 0cm 0cm}, clip, scale = 0.6]{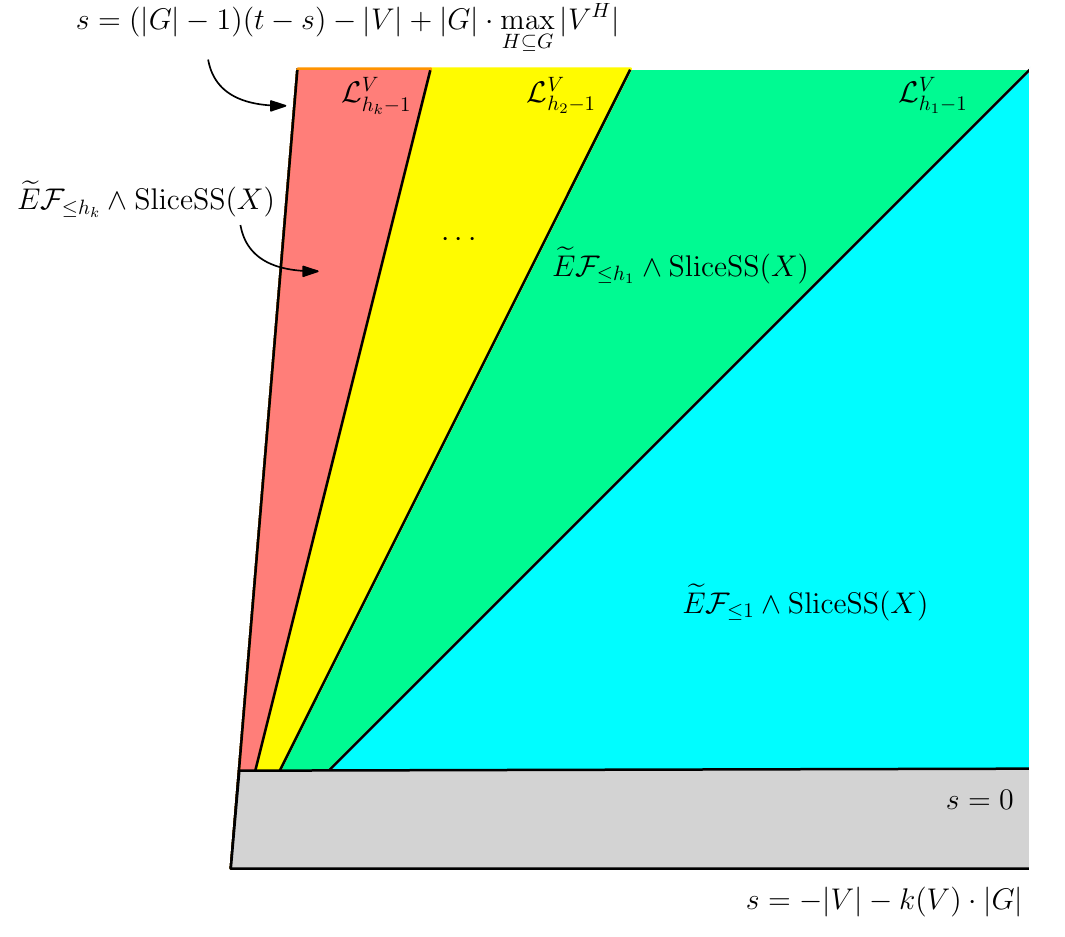}}
\caption{The stratification tower of the slice spectral sequence.}
\hfill
\label{fig:PicStratificationTower}
\end{center}
\end{figure}

\end{rmk}
\begin{lem}\label{lem:SliceCoconnectivity}
    Consider a nonnegative integer $n$ and a $G$-spectrum $Z$ which is slice $\leq n$. For every $G$-representation $W$, the $G$-spectrum $S^{-W} \wedge Z$ is slice $\leq n$, and $\Mpi_k(S^{-W} \wedge Z) = 0$ for $k>n$. 
\end{lem}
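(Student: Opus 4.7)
The plan is to unpack both conclusions as Hom-vanishings against slice cells, shift $S^{-W}$ across the Hom via the induction-restriction adjunction, and then verify that the resulting induced representation spheres are slice $\geq n+1$.

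Recall that $Z$ is slice $\leq n$ if and only if $[G_+ \sm_H S^{k\rho_H}, Z]^G = 0$ for every subgroup $H \subseteq G$ and every $k$ with $k|H| > n$, and that $\pi_k^H(Y) = [G_+ \sm_H S^k, Y]^G$. The induction-restriction adjunction yields
\[
[G_+ \sm_H S^U, \, S^{-W} \sm Z]^G \;\cong\; [G_+ \sm_H S^{U + i_H^* W}, \, Z]^G
\]
for any $U \in RO(H)$. Consequently, both conclusions reduce to showing that $G_+ \sm_H S^V$ is slice $\geq n+1$ for $V$ the actual $H$-representation $V = k\rho_H + i_H^* W$ (with $k|H| > n$, for the first conclusion) or $V = k + i_H^* W$ (with $k > n$, for the second, where $k$ denotes the trivial $k$-dimensional representation of $H$).

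To check slice connectivity of such an induced sphere, I apply the Hill--Yarnall criterion: a $G$-spectrum $Y$ is slice $\geq m$ iff $\Phi^K Y$ is $\lceil m/|K| \rceil$-connective for every $K \subseteq G$. Using the standard formula
\[
\Phi^K(G_+ \sm_H S^V) \;\simeq\; \bigvee_{\substack{[g] \in K\backslash G / H \\ g^{-1}Kg \subseteq H}} S^{|V^{g^{-1}Kg}|},
\]
a consequence of the double-coset decomposition and the symmetric monoidality of $\Phi^{(-)}$, the Hill--Yarnall condition reduces to the numerical inequality $|V^{K'}| \cdot |K'| \geq n+1$ for every subgroup $K' \subseteq H$.

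The numerical check is one line in each case, using $|\rho_H^{K'}| = [H : K']$. For $V = k\rho_H + i_H^* W$ we get $|V^{K'}| \cdot |K'| = k|H| + |W^{K'}| \cdot |K'| \geq k|H| \geq n+1$, since $|W^{K'}| \geq 0$ because $W$ is an actual representation. For $V = k + i_H^* W$ we get $|V^{K'}| \cdot |K'| = k|K'| + |W^{K'}| \cdot |K'| \geq k \geq n+1$. The main bookkeeping point, and the only place one can go wrong, is that slice connectivity is governed by the weighted quantity $|V^{K'}|\cdot |K'|$ rather than by $|V^{K'}|$ alone; the positivity of $|W^{K'}|$ for actual $W$ is precisely the slack that allows $S^{-W}$ to cross the Hom without breaking slice $\leq n$.
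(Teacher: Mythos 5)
Your proof is correct, but it travels a different road than the paper's. Both arguments begin by moving $S^{-W}$ across the Hom and reducing to a slice-positivity statement for an induced representation sphere; the difference lies in how that slice-positivity is established. The paper quotes the HHR machinery directly: Proposition 4.26 of HHR gives that $\widehat{S} \wedge S^W$ is slice $>n$ whenever $\widehat{S}$ is a slice cell of dimension $>n$ and $S^W$ is slice $\geq 0$, Lemma 4.14 of HHR then converts the resulting Hom-vanishing into the assertion that $S^{-W}\wedge Z$ is slice $\leq n$, and the second conclusion follows from the first via Proposition 4.40 of HHR (the fact that $G/H_+\wedge S^k$ is slice $\geq k$ for $k\geq 0$). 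You instead verify the slice-positivity of $G_+\wedge_H S^V$ by hand, via the Hill--Yarnall geometric-fixed-point criterion together with the double-coset formula for $\Phi^K$ of an induced spectrum, reducing everything to the numerical inequality $|V^{K'}|\cdot|K'|\geq n+1$. Your route is more self-contained --- it essentially reproves the needed special case of HHR Proposition 4.26 --- and is more in keeping with how the rest of the paper leans on Hill--Yarnall, as in the proof of \cref{prop:E2IsomROG}; the paper's route is shorter because it outsources to ready-made HHR results. One small redundancy on your side: once $S^{-W}\wedge Z$ is known to be slice $\leq n$, the vanishing of $\Mpi_k(S^{-W}\wedge Z)$ for $k>n$ already follows without reintroducing $W$, since $G_+\wedge_H S^k$ is itself slice $\geq k$ for $k\geq 0$, which is exactly the shortcut the paper takes; but your uniform two-case treatment is perfectly valid.
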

\begin{proof}
For any slice cell $\widehat{S}$ of dimension greater than $n$, the smash product $\widehat{S} \wedge S^{W}$ is slice $>n$ by \cite[Proposition 4.26]{HHR}.  Here, we have used the fact that $S^W$ is of slice $\geq 0$, which is equivalent to being connective.  Therefore, we have the equality 
\begin{align*}
\left[\widehat{S}, S^{-W} \wedge Z\right]^G \cong
\left[\widehat{S} \wedge S^W, Z\right]^G = 0.
\end{align*}
It follows from \cite[Lemma~4.14]{HHR} that the $G$-spectrum 
$S^{-W} \wedge Z$ is slice $\leq n$.

By \cite[Propostion~4.40]{HHR}, the $G$-cell $G/H_+ \wedge S^k$ is slice $\geq k$ whenever $k \geq 0$. Thus, $\pi_k^H(S^{-W} \wedge Z) = [G/H_+ \wedge S^k, S^{-W} \wedge Z]$ vanishes for every subgroup $H\subset G$. 
\end{proof}

\begin{proof}[Proof of \cref{thm:SliceSSVanishingRegion}]
The classes on the $\mathcal{E}_2$-page that are in position $(V+t-s, s)$ are elements in the Mackey functor homotopy group
\[\Mpi_{V+t-s} P^{|V|+t}_{|V|+t} X = \Mpi_{t-s} \left(S^{-V} \wedge P^{|V|+t}_{|V|+t} X\right).\]
To prove our claim, it suffices to show that this Mackey functor vanishes whenever the inequalities 
\[s < -|V|-k(V)\cdot|G|\] 
or
\[s > (|G|-1)(t-s) - |V|+ |G|\cdot \max_{H \subseteq G} |V^H|\]
hold.  

To prove the first inequality, we will analyze the slice coconnectivity of the $G$-spectrum $S^{-V} \wedge P^{|V|+t}_{|V|+t} X$. By the definition of $k(V)$, the formula $V' = k(V) \cdot \rho_G + V$.   defines an actual $G$-representation, which satisfies $S^{-V} \simeq S^{-V'} \wedge S^{k(V) \cdot \rho_G}$.  We have the equivalence
\begin{align*}
S^{-V} \wedge P^{|V|+t}_{|V|+t} X &\simeq S^{-V'} \wedge S^{k(V) \cdot \rho_G} \wedge P^{|V|+t}_{|V|+t} X \\
{} &\simeq  S^{-V'} \wedge P^{|V|+t+k(V) \cdot |G|}_{|V|+t+k(V) \cdot |G|} (S^{k(V) \cdot \rho_G} \wedge X).
\end{align*}
Here, the second equivalence follows from the fact that smashing by regular representation spheres induces an equivalence of slices (see \cite[Corollary~4.25]{HHR}).


Since $X$ is connective, $|V|+t$ is necessarily non-negative in our analysis (or else $P^{|V|+t}_{|V|+t} X$ will be contractible and its homotopy groups will automatically vanish).  Since $k(V)$ is also non-negative by definition, ${|V| + t + k(V) \cdot |G| \geq 0}$. 
Thus, \cref{lem:SliceCoconnectivity} implies that 
\[\Mpi_{t-s} \left(S^{-V} \wedge P^{|V|+t}_{|V|+t} X\right) = \Mpi_{t-s} \left(S^{-V'} \wedge P^{|V|+t+k(V) \cdot |G|}_{|V|+t+k(V) \cdot |G|} (S^{k(V) \cdot \rho_G} \wedge X)\right)\]
vanishes whenever the inequality 
\[t-s > |V| + t + k(V) \cdot |G|\]
or equivalently,
\[s < -|V| - k(V) \cdot |G|\]
holds.  This proves the first inequality.  

For the second inequality, we will analyze the connectivity of $S^{-V} \wedge P^{|V|+t}_{|V|+t} X$.  For any subgroup $H \subset G$, we have the equivalence 
\begin{align*}
\Phi^H\left(S^{-V} \wedge P^{|V|+t}_{|V|+t} X\right) &\simeq \Phi^H(S^{-V}) \wedge \Phi^H\left(P^{|V|+t}_{|V|+t} X\right) \\ 
&\simeq S^{-|V^H|} \wedge \Phi^H\left(P^{|V|+t}_{|V|+t} X\right). 
\end{align*}
Since $X$ is connected, we may assume again that $|V|+t \geq 0$.  By \cite[Theorem~2.5]{HillYarnall}, the $H$-geometric fixed points above is $\left(-|V^H| + \left\lceil \frac{|V|+t}{|H|}\right\rceil\right)$-connective.  Therefore, in order to find a lower bound for the connectivity of $S^{-V} \wedge P^{|V|+t}_{|V|+t} X$, it suffices to find a lower bound for 
\[\min_{H \subseteq G} \left(-|V^H| + \left\lceil \frac{|V|+t}{|H|}\right\rceil\right).\]
We have the following inequalities:
\begin{align*}
\min_{H \subseteq G} \left(-|V^H| + \left\lceil \frac{|V|+t}{|H|}\right\rceil\right) & \geq \min_{H \subseteq G} \left(-|V^H|\right) + \min_{H \subseteq G} \left\lceil \frac{|V|+t}{|H|}\right\rceil \\ 
&\geq -\max_{H \subseteq G} |V^H| + \left\lceil \frac{|V|+t}{|G|}\right\rceil  \quad \left(|V|+t \geq 0\right)\\ 
&\geq -\max_{H \subseteq G} |V^H| + \frac{|V|+t}{|G|}.
\end{align*}
Therefore $\left(S^{-V} \wedge P^{|V|+t}_{|V|+t} X\right)$ is at least $\left(-\max_{H \subseteq G} |V^H| + \frac{|V|+t}{|G|}\right)$-connective, and $\Mpi_{t-s} \left(S^{-V} \wedge P^{|V|+t}_{|V|+t} X\right)$ will vanish when the inequality 
\[t-s <-\max_{H \subseteq G} |V^H| + \frac{|V|+t}{|G|} \]
or equivalently, 
\[s> (|G|-1)(t-s) - |V|+ |G|\cdot \max_{H \subseteq G} |V^H|\]
holds.  This proves the second inequality. 
\end{proof}


\section{Specialization to cyclic \texorpdfstring{$p$}{text}-groups}
\label{sec:SpecializationCyclic}

In this section, we will specialize our results from \cref{sec:LocSliceSS} and \cref{sec:Stratification} to the case when $G$ is a cyclic $p$-group.  

\begin{df}\rm \label{df:FFH}
For $G$ a finite group and $H \subset G$ a subgroup, let $\FF[H]$ denote the family consisting of all subgroups of $G$ that do not contain $H$. 
\end{df}

When $G = C_{p^n}$, the subgroups of $G$ are linearly ordered by inclusion.  More precisely, they are of the form $C_{p^k}$ for $0 \leq k \leq n$.  For all $k$ and $\ell$ such that $1 \leq k \leq n$ and $p^{k-1}\leq \ell \leq p^k-1$, we have the equality $\mathcal{F}_{\leq \ell} = \mathcal{F}[C_{p^k}]$.  The chain of inclusions 
\[\FF_{\leq 0} \subset \FF_{\leq 1} \subset \cdots \FF_{\leq |G|-1}\]
is essentially 
\[\varnothing \subset \mathcal{F}[C_p] \subset \mathcal{F}[C_{p^2}] \subset \cdots \subset \mathcal{F}[C_{p^n}].\]

For $0 \leq i \leq n-1$, let $\lambda_{{i}}$ denote the 2-dimensional real $C_{p^n}$-representation corresponding to rotation by $\left(\frac{2\pi}{p^{i+1}}\right)$.  In particular, when $p = 2$ and $i = 0$, the representation $\lambda_0$ corresponds to rotation by $\pi$ and is equal to $2\sigma$, where $\sigma$ is the real sign representation of $C_{2^n}$.  If $X$ is a $C_{p^n}$-spectrum, then for all $1 \leq k \leq n$,  $\EF[C_{p^k}] \simeq S^{\infty \lambda_{n-k}}$, and there is an equivalence 
\[\EF[C_{p^k}] \wedge X \simeq S^{\infty \lambda_{{n-k}}} \wedge X \simeq a_{\lambda_{{n-k}}}^{-1}X.\]
Here, $a_{\lambda_{{n-k}}}: S^0 \to S^{\lambda_{{n-k}}}$ is the Euler class, corresponding to the inclusion of $S^0=\{0, \infty\}$ into $S^{\lambda_{n-k}}$.  Given this equivalence, we will sometimes denote the localized slice spectral sequence $\EF[C_{p^k}] \wedge \SliceSS(X)$ as $a_{\lambda_{n-k}}^{-1}\SliceSS(X)$.

For any $G$-spectrum $X$, we have the equivalence
\[(\EF[H] \wedge X)^H \simeq \Phi^H(X).\]
In the special case when $G = C_{p^n}$ and $H= C_{p^k}$, there is a residual $(C_{p^n}/C_{p^k})$-action on the $C_{p^k}$-geometric fixed points $\Phi^{C_{p^k}}(X)$, and we have the equivalence 
\[(\EF[C_{p^k}] \wedge X)^{C_{p^n}} \simeq  \left(\Phi^{C_{p^k}}(X)\right)^{C_{p^n}/C_{p^k}}.\]

Let $k$ and $\ell$ be integers such that $0 \leq k < \ell \leq n$.  If we set $\mathcal{F} = \mathcal{F}[C_{p^k}]$ and $\mathcal{F}'=\mathcal{F}[C_{p^\ell}]$ in the Comparison Theorem (\cref{thm:ROGSliceIsom2}), then for a connective $G$-spectrum $X$, the map of spectral sequences (left vertical map)
\[\begin{tikzcd}
\EF[C_{p^k}] \wedge \SliceSS(X) \ar[r, Rightarrow] \ar[d] & \pi_*^{C_{p^n}/C_{p^k}} \Phi^{C_{p^k}}(X) \ar[d] \\
\EF[C_{p^\ell}] \wedge \SliceSS(X) \ar[r, Rightarrow] &  \pi_*^{C_{p^n}/C_{p^\ell}} \Phi^{C_{p^\ell}}(X)
\end{tikzcd}\]
induces an isomorphism of spectral sequences on or above the line $\mathcal{L}_{p^{\ell-1}-1}$ in the integer-graded page.  Furthermore, if we set $k = 0$, the Slice Recovery Theorem (\cref{thm:SliceRecovery1}) shows that for the map of spectral sequences 
\[\begin{tikzcd}
\SliceSS(X) \ar[r, Rightarrow] \ar[d, "\varphi_{p^{\ell-1}}"] & \pi_*^{C_{p^n}} X \ar[d] \\
\EF[C_{p^\ell}] \wedge \SliceSS(X) \ar[r, Rightarrow] &  \pi_*^{C_{p^n}/C_{p^\ell}} \Phi^{C_{p^\ell}}(X),
\end{tikzcd}\]
the localized slice spectral sequence $\EF[C_{p^\ell}] \wedge \SliceSS(X)$ recovers all the differentials in $\SliceSS(X)$ that originates on or above the line $\mathcal{L}_{p^{\ell-1}-1}$ in the integer-graded page.  When we consider these spectral sequences as $RO(G)$-graded spectral sequences, \cref{thm:SliceRecovery1} ensures that analogous statements hold for all $(V+t-s, s)$-graded pages as well by replacing the lines $\mathcal{L}_{p^{\ell-1}-1}$ with $\mathcal{L}_{p^{\ell-1}-1}^V$.  

In summary, when $G= C_{p^n}$ and $X$ is connective, the stratification tower (\ref{diagram:SliceSSTower}) is the following tower: 

\begin{equation} \label{diagram:SliceSSTower2}
\begin{tikzcd}
\SliceSS(X) \ar[rr, "\mathcal{L}^V_0"] \ar[rrd, "\mathcal{L}^V_{p-1}"] \ar[rrddd, bend right = 10, "\mathcal{L}^V_{p^{n-2}-1}"] \ar[rrdddd, bend right = 30, "\mathcal{L}^V_{p^{n-1}-1}",swap]&& a_{\lambda_{n-1}}^{-1} \SliceSS(X) \ar[d, "\mathcal{L}^V_{p-1}"] \ar[r, Rightarrow] & \Mpi_\star \Phi^{C_p}(X) \ar[d] \\
&& a_{\lambda_{n-2}}^{-1} \SliceSS(X) \ar[d, "\mathcal{L}^V_{p^2-1}"] \ar[r,Rightarrow] & \Mpi_\star \Phi^{C_{p^2}}(X) \ar[d]\\
&& \vdots \ar[d, "\mathcal{L}^V_{p^{n-2}-1}"] & \vdots \ar[d] \\ 
&& a_{\lambda_1}^{-1} \SliceSS(X) \ar[d, "\mathcal{L}^V_{p^{n-1}-1}"] \ar[r, Rightarrow] & \Mpi_\star \Phi^{C_{p^{n-1}}}(X) \ar[d] \\
&& a_\sigma^{-1} \SliceSS(X) \ar[r,Rightarrow] & \pi_*\, \Phi^{C_{p^n}}(X).
\end{tikzcd}
\end{equation}
In the tower above, the maps are labeled by the lines $\mathcal{L}^V_{h-1}$, indicating the isomorphism regions.  The slice spectral sequence of $X$ is then stratified into different regions separated by the lines $\mathcal{L}^V_{p-1}$, $\mathcal{L}^V_{p^2-1}$, $\ldots$, and $\mathcal{L}^V_{p^{n-1}-1}$.  The differentials within these regions can be recovered from the localized slice spectral sequences, which compute the geometric fixed points equipped with the residue $(C_{p^n}/C_{p^k})$-action.  As we move up the tower, \cref{thm:SliceRecovery1} shows that each localized slice spectral sequence contains increasingly more information about the original slice spectral sequence. 

This stratification provides an inductive approach for computing the slice spectral sequence of $X$.  We start by computing the bottom spectral sequence of the tower, {$\EF[C_{p^n}] \wedge \SliceSS(X) = a_{\lambda_0}^{-1}\SliceSS(X)$}, which recovers all the differentials in $\SliceSS(X)$ originating on or above the line $\mathcal{L}^V_{p^{n-1}-1}$.  Then, we proceed to compute the next spectral sequence, ${\EF[C_{p^{n-1}}] \wedge \SliceSS(X) = a_{\lambda_1}^{-1}\SliceSS(X)}$, which recovers the differentials in $\SliceSS(X)$ originating on or above the line $\mathcal{L}^V_{{p^{n-2}}-1}$.  This process continues until we have computed the topmost spectral sequence, ${\EF[C_p] \wedge \SliceSS(X) = a_{\lambda_{{n-1}}}^{-1}\SliceSS(X)}$.  The differentials in this localized slice spectral sequence recover all of the original slice differentials, which are concentrated in the first quadrant.

\begin{exam}\rm
Consider the $C_2$-spectrum $\BPR \langle 1 \rangle$.  In order to compute its slice differentials, we first compute the differentials in the localized slice spectral sequence 
 $\widetilde{E}C_2 \wedge \SliceSS(\BPR \langle 1 \rangle) = a_\sigma^{-1}\SliceSS(\BPR \langle 1 \rangle)$ (see \cref{fig:BPRoneSigma}). 
 ~\cref{thm:SliceRecovery1} then implies that all the differentials in $\SliceSS(\BPR \langle 1 \rangle)$ (see \cref{fig:BPRone}) can be recovered from the differentials in $a_\sigma^{-1}\SliceSS(\BPR \langle 1 \rangle)$ by truncating at the line $\mathcal{L}_0$.
\end{exam}

\begin{figure}
\begin{center}
\makebox[\textwidth]{\includegraphics[trim={2.5cm 15.7cm 4cm 4cm}, clip, page = 1, scale = 0.7]{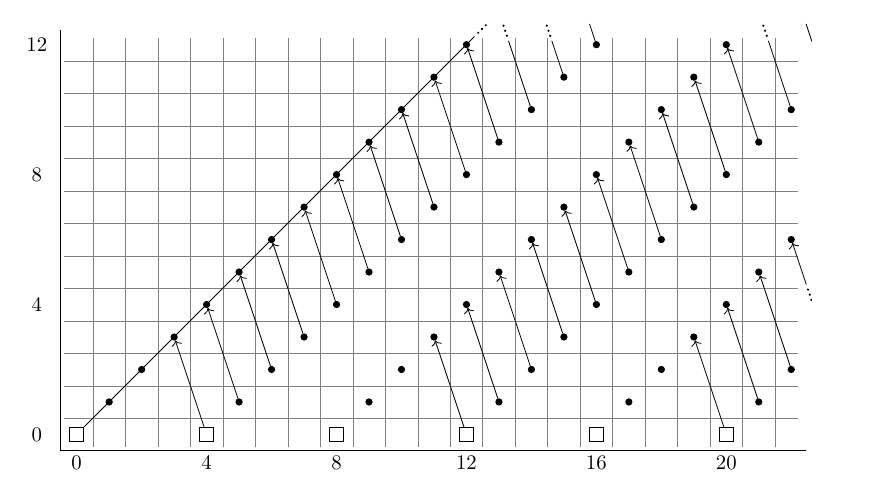}}
\caption{The slice spectral sequence of $\BPR\langle 1 \rangle$.} 
\hfill
\label{fig:BPRone}
\end{center}
\end{figure}

\begin{figure}
\begin{center}
\makebox[\textwidth]{\includegraphics[trim={2.5cm 9.5cm 4cm 4cm}, clip, page = 2, scale = 0.7]{E2C2SSS.pdf}}
\caption{The localized slice spectral sequence of $a_\sigma^{-1}\BPR\langle 1 \rangle$.} 
\hfill
\label{fig:BPRoneSigma}
\end{center}
\end{figure}

\begin{exam} \rm
Consider the $C_4$-spectrum $\BPCfourone$.  There are two localized slice spectral sequences:
\[\EF[C_4]\wedge \SliceSS(\BPCfourone) = a_\sigma^{-1}\SliceSS(\BPCfourone)\]
and
\[\EF[C_2] \wedge \SliceSS(\BPCfourone) = a_{\lambda_1}^{-1}\SliceSS(\BPCfourone).\] 
We first localize with respect to $a_\sigma$ and compute $a_\sigma^{-1}\SliceSS(\BPCfourone)$ (see \cref{fig:BPCfouroneSigma}).  This recovers all the differentials on or above the line $\mathcal{L}_1$ in the original slice spectral sequence $\SliceSS(\BPCfourone)$ (see ~\cref{fig:BPCfourone}).  Then, we localize with respect to $a_{\lambda_1}$ and compute $a_{\lambda_1}^{-1}\SliceSS(\BPCfourone)$ (see ~\cref{fig:BPCfouroneLambda}).  Truncating this spectral sequence at the line $\mathcal{L}_0$ will then recover all the differentials in $\SliceSS(\BPCfourone)$.  
\end{exam}

\begin{figure}
\begin{center}
\makebox[\textwidth]{\includegraphics[trim={0cm 3.6cm 4cm 4cm}, clip, page = 3, scale = 0.7]{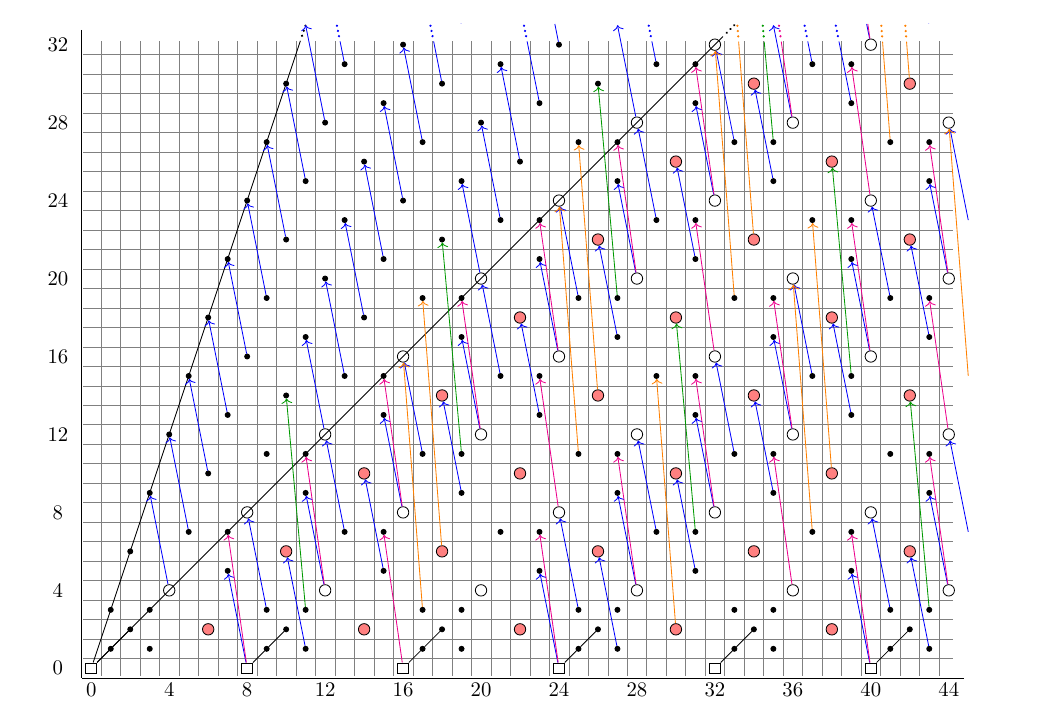}}
\caption{The localized slice spectral sequence of $a_\sigma^{-1}\BPCfourone$.} 
\hfill
\label{fig:BPCfouroneSigma}
\end{center}
\end{figure}

\begin{figure}
\begin{center}
\makebox[\textwidth]{\includegraphics[trim={0cm 12cm 4cm 4cm}, clip, page = 1, scale = 0.7]{E2C4SSS.pdf}}
\caption{The slice spectral sequence of $\BPCfourone$.} 
\hfill
\label{fig:BPCfourone}
\end{center}
\end{figure}

\begin{figure}
\begin{center}
\makebox[\textwidth]{\includegraphics[trim={0cm 3.6cm 4cm 4cm}, clip, page = 2, scale = 0.7]{E2C4SSS.pdf}}
\caption{The localized slice spectral sequence of $a_{\lambda_1}^{-1}\BPCfourone$.} 
\hfill
\label{fig:BPCfouroneLambda}
\end{center}
\end{figure}


\bibliographystyle{alpha}
\bibliography{Bibliography}

\newcommand{\etalchar}[1]{$^{#1}$}
\begin{thebibliography}{HSWX23}

\bibitem[Ati66]{AtiyahKR}
M.~F. Atiyah.
\newblock {$K$}-theory and reality.
\newblock {\em Quart. J. Math. Oxford Ser. (2)}, 17:367--386, 1966.

\bibitem[BBHS20]{BBHS}
Agn\`es Beaudry, Irina Bobkova, Michael Hill, and Vesna Stojanoska.
\newblock Invertible {$K(2)$}-local {$E$}-modules in {$C_4$}-spectra.
\newblock {\em Algebr. Geom. Topol.}, 20(7):3423--3503, 2020.

\bibitem[BHL{\etalchar{+}}22]{BHLSZ}
Agn\'es Beaudry, Michael~H. Hill, Tyler Lawson, XiaoLin~Danny Shi, and Mingcong
  Zeng.
\newblock On the slice spectral sequence for quotients of norms of real
  bordism.
\newblock {\em Arxiv 2204.04366}, 2022.

\bibitem[BHSZ21]{BHSZ}
Agn\`es Beaudry, Michael~A. Hill, XiaoLin~Danny Shi, and Mingcong Zeng.
\newblock Models of {L}ubin-{T}ate spectra via {R}eal bordism theory.
\newblock {\em Adv. Math.}, 392, 2021.

\bibitem[BO16]{BehrensOrmsby}
Mark Behrens and Kyle Ormsby.
\newblock On the homotopy of {$Q(3)$} and {$Q(5)$} at the prime 2.
\newblock {\em Algebr. Geom. Topol.}, 16(5):2459--2534, 2016.

\bibitem[Dug05]{DuggerKR}
Daniel Dugger.
\newblock An {A}tiyah--{H}irzebruch spectral sequence for {KR}-theory.
\newblock {\em K-theory}, 35(3):213--256, 2005.

\bibitem[HHR16]{HHR}
M.~A. Hill, M.~J. Hopkins, and D.~C. Ravenel.
\newblock On the nonexistence of elements of {K}ervaire invariant one.
\newblock {\em Ann. of Math. (2)}, 184(1):1--262, 2016.

\bibitem[HHR17]{HHRKH}
Michael~A. Hill, Michael~J. Hopkins, and Douglas~C. Ravenel.
\newblock The slice spectral sequence for the {$C_4$} analog of real
  {$K$}-theory.
\newblock {\em Forum Math.}, 29(2):383--447, 2017.

\bibitem[HS20]{HahnShi}
Jeremy Hahn and XiaoLin~Danny Shi.
\newblock Real orientations of {L}ubin-{T}ate spectra.
\newblock {\em Invent. Math.}, 221(3):731--776, 2020.

\bibitem[HSWX23]{HSWX}
Michael~A. Hill, XiaoLin~Danny Shi, Guozhen Wang, and Zhouli Xu.
\newblock The slice spectral sequence of a {$C_4$}-equivariant height-4
  {L}ubin-{T}ate theory.
\newblock {\em Mem. Amer. Math. Soc.}, 288(1429):v+119, 2023.

\bibitem[HY18]{HillYarnall}
Michael~A. Hill and Carolyn Yarnall.
\newblock A new formulation of the equivariant slice filtration with
  applications to {$C_p$}-slices.
\newblock {\em Proc. Amer. Math. Soc.}, 146(8):3605--3614, 2018.

\bibitem[MSZ23]{MeierShiZengHF2}
Lennart Meier, XiaoLin~Danny Shi, and Mingcong Zeng.
\newblock The localized slice spectral sequence, norms of {R}eal bordism, and
  the {S}egal conjecture.
\newblock {\em Adv. Math.}, 412, 2023.

\bibitem[MSZ24]{MeierShiZengTranschromatic}
Lennart {Meier}, XiaoLin~Danny {Shi}, and Mingcong {Zeng}.
\newblock {Transchromatic phenomena in the equivariant slice spectral
  sequence}.
\newblock {\em arXiv e-prints}, page arXiv:2403.00741, March 2024.

\bibitem[Ull13a]{UllmanPaper}
John Ullman.
\newblock On the slice spectral sequence.
\newblock {\em Algebr. Geom. Topol.}, 13(3):1743--1755, 2013.

\bibitem[Ull13b]{UllmanThesis}
John~Richard Ullman.
\newblock {\em On the {R}egular {S}lice {S}pectral {S}equence}.
\newblock ProQuest LLC, Ann Arbor, MI, 2013.
\newblock Thesis (Ph.D.)--Massachusetts Institute of Technology.

\end{thebibliography}

\end{document}